\newtheorem{theorem}{Theorem}[section]
\newtheorem{corollary}[theorem]{Corollary}
\newtheorem{proposition}[theorem]{Proposition}
\theoremstyle{definition}
\newtheorem{remark}[theorem]{Remark}
\numberwithin{equation}{section}
\def\CA{{\mathscr A}}
\def\CB{{\mathscr B}}
\def\CK{{\mathscr K}}
\def\CV{{\mathscr V}}
\def\tee{t}
\def\eye{i}
\newcommand{\cd}[2][]{\vcenter{\hbox{\xymatrix#1{#2}}}}
\newcommand{\ltwocell    }[3][0.5]{\ar@{}[#2] \ar@{=>}?(#1)+/r 0.2cm/;?(#1)+/l 0.2cm/^{#3}}
\newcommand{\rtwocell    }[3][0.5]{\ar@{}[#2] \ar@{=>}?(#1)+/l 0.2cm/;?(#1)+/r 0.2cm/^{#3}}
\newcommand{\dtwocell    }[3][0.5]{\ar@{}[#2] \ar@{=>}?(#1)+/u 0.2cm/;?(#1)+/d 0.2cm/^{#3}}
\newcommand{\utwocell    }[3][0.5]{\ar@{}[#2] \ar@{=>}?(#1)+/d 0.2cm/;?(#1)+/u 0.2cm/_{#3}}
\newcommand{\dltwocell   }[3][0.5]{\ar@{}[#2] \ar@{=>}?(#1)+/ur 0.2cm/;?(#1)+/dl 0.2cm/^{#3}}
\newcommand{\drtwocell   }[3][0.5]{\ar@{}[#2] \ar@{=>}?(#1)+/ul 0.2cm/;?(#1)+/dr 0.2cm/^{#3}}
\newcommand{\ultwocell   }[3][0.5]{\ar@{}[#2] \ar@{=>}?(#1)+/dr 0.2cm/;?(#1)+/ul 0.2cm/^{#3}}
\newcommand{\urtwocell   }[3][0.5]{\ar@{}[#2] \ar@{=>}?(#1)+/dl 0.2cm/;?(#1)+/ur 0.2cm/^{#3}}
\newcommand{\stringpent}{
\begin{gathered}
\vcenter{\hbox{\begin{tikzpicture}[y=0.8pt, x=0.8pt,yscale=-1, inner sep=0pt, outer sep=0pt, every text node part/.style={font=\scriptsize} ]
  \path[draw=black,line join=miter,line cap=butt,line width=0.650pt]
    (0.5761,996.5573) .. controls (25.8299,996.0522) and (35.7165,994.4461) ..
    node[above right=0.12cm,at start] {$\!\!(\cde  1)  1$}(48.0259,1000.9550);
  \path[draw=black,line join=miter,line cap=butt,line width=0.650pt]
    (0.5761,1020.4270) .. controls (32.7994,1020.6795) and (38.4294,1009.3612) ..
    node[above right=0.12cm,at start] {$\!\bce  1$}(48.0259,1003.0477);
  \path[draw=black,line join=miter,line cap=butt,line width=0.650pt]
    (49.8969,1000.5044) .. controls (58.2307,980.6554) and (151.0215,979.4784) ..
    node[above right=0.25cm,pos=0.4] {$\mathfrak a  1$}(166.1738,987.8122);
  \path[draw=black,line join=miter,line cap=butt,line width=0.650pt]
    (49.6135,1003.1578) .. controls (60.9777,1006.6933) and (72.9109,1017.9473) ..
    node[below left=0.05cm,pos=0.57]{$\bde  1$} (80.4466,1027.1123);
  \path[draw=black,line join=miter,line cap=butt,line width=0.650pt]
    (0.5761,1042.0698) .. controls (14.4657,1042.3223) and (75.2981,1040.0698) ..
    node[above right=0.12cm,at start] {$\!\abe$} (80.8539,1029.7127);
  \path[draw=black,line join=miter,line cap=butt,line width=0.650pt]
    (82.2825,1030.8845) .. controls (90.1821,1054.0933) and (173.6677,1051.1703)
    .. node[above left=0.12cm,at end] {$\ade$}(214.7867,1051.3122);
  \path[draw=black,line join=miter,line cap=butt,line width=0.650pt]
    (141.5590,1019.2256) .. controls (143.0743,1011.6494) and (166.0258,992.5776)
    .. node[below right=0.04cm] {$1  \mathfrak a$}(166.0258,992.5776);
  \path[draw=black,line join=miter,line cap=butt,line width=0.650pt]
    (141.6203,1021.3685) .. controls (141.6203,1021.3685) and (163.4571,1014.0062)
    .. node[above left=0.12cm,at end] {$1  (1  \abc)\!\!$}(214.7868,1014.7638);
  \path[draw=black,line join=miter,line cap=butt,line width=0.650pt]
    (141.4111,1023.0061) .. controls (141.4111,1023.0061) and (142.5493,1033.5038)
    .. node[above left=0.12cm,at end] {$1  \acd$\!}(214.7868,1033.7564);
  \path[draw=black,line join=miter,line cap=butt,line width=0.650pt]
    (167.5590,988.4831) .. controls (167.5590,988.4831) and (177.6142,980.4754) ..
    node[above left=0.12cm,at end] {$\mathfrak a$\!} (214.7868,979.9703);
  \path[draw=black,line join=miter,line cap=butt,line width=0.650pt]
    (167.9162,991.3403) .. controls (172.8729,995.3375) and (185.9479,996.5546) ..
    node[above left=0.12cm,at end] {$\mathfrak a$\!}(214.7867,996.5546);
  \path[draw=black,line join=miter,line cap=butt,line width=0.650pt]
    (82.5536,1028.3546) .. controls (96.1906,1037.3648) and (132.5520,1035.8496)
    .. node[below right=0.06cm,pos=0.42] {$1  \abd$} (139.8756,1022.0412);
  \path[draw=black,line join=miter,line cap=butt,line width=0.650pt]
    (82.7084,1025.8212) .. controls (84.1609,1008.5230) and (138.1030,990.5459) ..
    node[above left=0.07cm,pos=0.6] {$\mathfrak a$}(166.0786,990.2620)
    (49.8145,1001.8228) .. controls (60.2802,1001.0663) and (79.7259,1003.0374) ..
    node[above right=0.08cm,pos=0.4,rotate=-9] {$(1  \bcd)  1$}(98.0839,1006.4270)
    (105.9744,1007.9878) .. controls (121.2679,1011.2244) and (134.6866,1015.3648) ..
    node[below left=0.08cm,pos=0.7,rotate=-15] {$1  (\bcd  1)$}(139.8779,1019.5827);
  \path[fill=black] (166.83624,990.0094) node[circle, draw, line width=0.65pt,
    minimum width=5mm, fill=white, inner sep=0.25mm] (text3313) {$
      \pi$    };
  \path[fill=black] (48.501251,1001.512) node[circle, draw, line width=0.65pt,
    minimum width=5mm, fill=white, inner sep=0.25mm] (text3305) {$\bcde  1$    };
  \path[fill=black] (81.600487,1027.7435) node[circle, draw, line width=0.65pt,
    minimum width=5mm, fill=white, inner sep=0.25mm] (text3309) {$\abde$    };
  \path[fill=black] (145.38065,1021.0309) node[circle, draw, line width=0.65pt,
    minimum width=5mm, fill=white, inner sep=0.25mm] (text3317) {$1  \abcd$  };
\end{tikzpicture}}} \ = \
\vcenter{\hbox{\begin{tikzpicture}[y=0.8pt, x=0.7pt,yscale=-1, inner sep=0pt, outer sep=0pt, every text node part/.style={font=\scriptsize} ]
  \path[draw=black,line join=miter,line cap=butt,line width=0.650pt]
    (0.5761,1010.9221) .. controls (22.7994,1011.1746) and (33.5330,1012.5294) ..
    node[above right=0.12cm,at start] {\!$\bce  1$}(49.1780,1027.2420);
  \path[draw=black,line join=miter,line cap=butt,line width=0.650pt]
    (0.5761,1042.0698) .. controls (14.4657,1042.3223) and (43.6151,1040.3578) ..
    node[above right=0.12cm,at start] {\!$\abe$}(49.1709,1030.0007);
  \path[draw=black,line join=miter,line cap=butt,line width=0.650pt]
    (51.1755,1030.5965) .. controls (67.4280,1056.6856) and (126.7968,1041.6654)
    .. node[below right=0.14cm] {$\ace$} (139.8511,1033.1665);
  \path[draw=black,line join=miter,line cap=butt,line width=0.650pt]
    (142.8500,1030.5854) .. controls (142.8500,1030.5854) and (184.5612,1028.3914)
    .. node[above left=0.12cm,at end] {$1  \acd$\!}(225.8909,1029.1490);
  \path[draw=black,line join=miter,line cap=butt,line width=0.650pt]
    (142.6407,1033.9511) .. controls (142.6407,1033.9511) and (172.9964,1047.0248)
    .. node[above left=0.12cm,at end] {$\ade$\!}(225.8909,1046.9894);
  \path[draw=black,line join=miter,line cap=butt,line width=0.650pt]
    (51.8895,1026.1092) .. controls (64.3077,997.3630) and (142.1189,975.3128) ..
    node[above left=0.12cm,at end] {$\mathfrak a$\!}(225.8909,975.6050)
    (52.8868,1028.6427) .. controls (64.4224,1023.3508) and (87.4860,1019.3533) ..
    node[below right=0.1cm,pos=0.53] {$1  \abc$} (113.0660,1016.4776)
    (121.4833,1015.5795) .. controls (135.8648,1014.1237) and (150.7159,1013.0048) ..
    node[above=0.1cm] {$1  \abc$} (164.5150,1012.1936)
    (173.4370,1011.7035) .. controls (191.3809,1010.7870) and (206.7880,1010.4220) ..
    node[above left=0.12cm,at end] {$1  (1  \abc)\!\!$} (225.8909,1010.5363)
    (143.3647,1028.4425) .. controls (173.9707,1013.3776) and (183.0558,992.0340) ..
    node[above left=0.12cm,at end] {$\mathfrak a$\!}(225.8909,992.6100)
    (1.1521,981.0038) .. controls (30.9106,980.4472) and (58.1432,987.9204) ..
    node[above right=0.12cm,at start] {$\!\!(\cde  1)  1$}(81.9942,998.1170)
    (88.1242,1000.8286) .. controls (108.0234,1009.9228) and (125.3927,1020.6737) ..
    node[above right=0.1cm,pos=0.22] {$\cde  1$}
    (139.6961,1029.7577);
  \path[fill=black] (49.917496,1028.8956) node[circle, draw, line width=0.65pt,
    minimum width=5mm, fill=white, inner sep=0.25mm] (text3309) {$\abce$    };
  \path[fill=black] (139.03424,1031.976) node[circle, draw, line width=0.65pt,
    minimum width=5mm, fill=white, inner sep=0.25mm] (text3317) {$\acde$  };
\end{tikzpicture}}}
\end{gathered}
}
\begin{document}

\author{Mitchell Buckley, Richard Garner, Stephen Lack and Ross Street}
\title{Skew-monoidal categories and the Catalan simplicial set}
\date{\small{1 July 2013}}

\address{
Centre of Australian Category Theory,
Macquarie University NSW 2109,
Australia}

\email{mitchell.buckley@mq.edu.au, richard.garner@mq.edu.au, steve.lack@mq.edu.au, ross.street@mq.edu.au}

\keywords{simplicial set; Catalan numbers; skew-monoidal category; nerve; quantum category; monoidal bicategory; monoidale}
\subjclass[2000]{18D10; 18D05; 18G30; 55U10; 05A15; 06A40; 17B37; 20G42; 81R50}

\thanks{The authors gratefully acknowledge the support of
Australian Research Council Discovery Grant DP130101969,
Buckley's Macquarie University Postgraduate Scholarship,
Garner's Australian Research Council Discovery Grant DP110102360, and
Lack's Australian Research Council Future Fellowship. }

\maketitle

\section{Introduction}

The $n$th Catalan number $C_n$, given explicitly by $\frac{1}{n+1}\binom{2n}{n}$, is well-known to be the answer
to many different counting problems. 
For example, it is the number of bracketings of an $(n+1)$-fold product.
Thus there are many families of sets, $\mathbb C_n$, indexed by the natural numbers, whose cardinalities are the
Catalan numbers; such families might then be called ``Catalan sets''. Stanley~\cite{StanleyEC2, StanleyAddendum} describes at least 205 such families. 
We shall show how to define functions between these sets, in such a way as to produce a simplicial set $\mathbb C$, which is the ``Catalan simplicial set'' of the title. 
This is done using what seems to be a new description of the Catalan sets, which relies heavily on the Boolean algebra $\mathbf 2$.

Simplicial sets are abstract, combinatorial models of spaces, most often used in homotopy theory. They also arise, however,
as models of higher-dimensional categories, and that is their main role in this paper: our primary goal is to show that the simplicial
set $\mathbb C$ encodes, in a precise sense, a particular categorical structure called a {\em skew-monoidal category}. We shall show
that a skew-monoidal category is the same thing as a simplicial map from $\mathbb C$ to another simplicial set $N\mathrm{Cat}$,
the nerve of the monoidal bicategory $\mathrm{Cat}$ of categories and functors.

The structure of skew-monoidal category was introduced recently by Szlach\'anyi~\cite{Szl2012} in his study of bialgebroids, which are themselves
an extension of the notion of quantum group.

Thus the work presented here lies at the interface of several 
mathematical disciplines:
\begin{itemize}[nolistsep]
\item[(a)] algebraic topology, because it involves simplicial sets and nerves;
\item[(b)] combinatorics, in the form of the Catalan numbers;
\item[(c)] quantum groups, through recent work on bialgebroids;
\item[(d)] logic, because of the distinguished role of the Boolean algebra $\mathbf{2}$; and
\item[(e)] category theory.
\end{itemize}

The natural generalisation of the notion of monoid from the level of
sets to categories is that of \emph{monoidal category}~\cite{CWM}. Like a
monoid, a monoidal category $\CA$ comes equipped with a a unit element
$I \in \CA$ and a multiplication $\otimes \colon \CA \times
\CA \to \CA$ (now a functor, rather than a
function); but unlike a monoid, the basic operations of a monoidal
category do not satisfy associativity and unitality on the nose, but
only up to coherent natural families of isomorphisms:
\begin{equation}\label{eq:constraints}
\begin{gathered}
  \lambda_A \colon I \otimes A \to A \qquad \text{and} \qquad 
  \rho_A \colon A \to A \otimes I \quad \text{(for $A \in \CA$)}\\
  \alpha_{ABC} \colon (A \otimes B) \otimes
  C \to A \otimes (B \otimes C)\quad \text{(for $A,B,C, \in
    \CA$).}
\end{gathered}
\end{equation}
In calling these families coherent, we mean to say that certain
diagrams of derived natural transformations, built from composites of
tensors of $\alpha$'s, $\lambda$'s and $\rho$'s, must commute. The
commutativity of these particular diagrams in fact implies the
commutativity of \emph{all} such diagrams: this is one form of the
coherence theorem for monoidal categories~\cite{CWM}, and justifies the
choice of axioms made. Mac\ Lane's original formulation~\cite{ML1963}
posited five generating axioms, recalled in Section~\ref{sec:skew}
below; Kelly later reduced these to two~\cite{Kelly1964}.

Recently, Szlach\'anyi~\cite{Szl2012} has introduced \emph{skew-monoidal
  categories}. The basic data for a skew-monoidal category are the
same as for a monoidal category, except that the constraint morphisms
in~\eqref{eq:constraints} are no longer required to be invertible. The
axioms which are to be satisfied are Mac\ Lane's original five, rather
than Kelly's two; the choice is substantive, as without invertibility
in~\eqref{eq:constraints}, the two axiomatisations are no longer
equivalent. 
Szlach\'anyi's motivation in~\cite{Szl2012} for defining skew-monoidal
structure comes from representation theory: he identifies left
bialgebroids based on a ring $R$ with closed skew-monoidal structures
on the category of left $R$-modules. Lack and Street have placed this
result in a more general context, showing in~\cite{Smswqc} that the quantum
categories of~\cite{QCat} can be captured as
\emph{skew monoidales}---internal skew-monoidal objects---in a
suitable monoidal bicategory.

Whilst these applications justify the use of skew-monoidal structure,
they do not give an intrinsic justification for the form the structure
takes. There are in fact two places in the definition where a
non-obvious choice has been made.  The first concerns the orientation
of the maps in~\eqref{eq:constraints}.  For example, had we taken
$\lambda$ to have components $A \to I \otimes A$, whilst leaving
$\rho$ and $\alpha$ unchanged, we would have obtained (the one-object
case of) Burroni's notion of~\emph{pseudocategory}~\cite{Burroni1971}; 
on the other hand, if we had reversed the sense of $\alpha$ whilst leaving
$\lambda$ and $\rho$ unchanged, we would have obtained something very close to Grandis'
notion~\cite{Grandis2006} of \emph{d-lax $2$-category}.

The second choice concerns the axioms the maps
in~\eqref{eq:constraints} must satisfy. We have already said that Mac\
Lane's five axioms are no longer equivalent to Kelly's two in the skew
setting; so why, then, should we prefer the former to the latter?  For
monoidal categories, we justified the axioms in terms of a theorem
stating that all diagrams of coherence morphisms commute. In the
skew-monoidal case, we have no such justification, since a general
diagram of skew-monoidal coherence morphisms need \emph{not} commute;
describing the structure these coherence morphisms determine is in fact
quite subtle~\cite{Tosmc}.

The objective of the paper is to provide a perspective on
skew-monoidal structure which, amongst other things, makes it quite
apparent why the choices made above are natural ones. 
To do this, we use the Catalan simplicial set $\mathbb C$ mentioned
above. It turns out to be quite easy to describe: it is itself a 
nerve, the nerve of the monoidal poset
$(\mathbf 2, \vee, 0)$. In particular, this makes it
$2$-coskeletal---meaning that for $n > 2$, each $n$-simplex boundary
has a unique filler---and thus completely determined by its $0$-, $1$-
and $2$-simplices, of which it has one, two, and five respectively; more
generally, the number of $n$-simplices is the $n$-th Catalan number.

Our perspective, then, is that $\mathbb C$ classifies skew-monoidal 
structures in the sense that simplicial
maps from $\mathbb C$ into a suitably-defined nerve of $\mathbf{Cat}$
are precisely skew-monoidal categories. More generally,
skew monoidales in a monoidal bicategory $\CK$ are classified by maps
from $\mathbb C$ into the simplicial nerve of $\CK$.

The two non-degenerate $2$-simplices in $\mathbb C$ encode the tensor
and unit operations borne by any skew-monoidal category; the
non-degenerate $3$-simplices encode $\alpha$, $\lambda$ and $\rho$,
with the orientations specified above; whilst the non-degenerate
$4$-simplices encode the skew-monoidal axioms. The coskeletality means
that, in particular, the $3$- and $4$-simplices are completely
determined by the $2$-simplices, and it is in this sense that our
perspective justifies the choices of coherence data and axioms for a
skew-monoidal structure.

There is another well-known connection between the Catalan numbers and (skew) monoidal structure, 
arising from the fact that the $n$th Catalan number $C_n$ is the number of ways to bracket an 
$(n+1)$-fold product. 
The connection described in this paper seems to be quite different; 
indeed, in the context of this paper $C_n$ involves $n$-fold products. 

This work is inspired by an old idea of Michael Johnson on how to capture not
only associativity but also unitality constraints simplicially.  He
reminded us of this in a recent talk~\cite{MJ2013} to the
Australian Category Seminar.

\section{Skew-monoidal categories and skew monoidales}\label{sec:skew}

As in the introduction, a \emph{skew-monoidal category} is a  
 category $\CA$ equipped with a unit element $I \in \CA$, a tensor
product $\otimes \colon \CA \times \CA \to \CA$, and natural families
of (non-invertible) constraint maps $\alpha$, $\lambda$ and $\rho$ as
in~\eqref{eq:constraints}, all subject to the commutativity of the
following diagrams---wherein tensor is denoted by juxtaposition---for
all $a,b,c,d \in \CA$:

\begin{equation}\label{axiom1}
\cd[@C-36pt]{
& &(ab)(cd) \ar[drr]^{\alpha}& & \\
((ab)c)d \ar[urr]^{\alpha} \ar[dr]_{\alpha1} & & & & a(b(c(d)) \\
& (a(bc))d \ar[rr]_{\alpha} && a((bc)d) \ar[ur]_{1\alpha}&
}
\end{equation}
\begin{equation}\label{axiom4}
\cd[@!C@C-6pt]{
& (aI)b \ar[r]^{\alpha} & a(Ib) \ar[dr]^{1\lambda} & \\
ab \ar[ur]^{\rho1} \ar[rrr]_{\mathrm{id}} & & & ab
}
\end{equation}
\begin{equation}\label{axiom3}
\cd[@!C@C-12pt]{
& I(ab) \ar[dr]^{\lambda} & \\
(Ia)b \ar[ur]^{\alpha} \ar[rr]_{\lambda1} &  & ab
}
\end{equation}
\begin{equation}\label{axiom2}
\cd[@!C@C-12pt]{
& (ab)I \ar[dr]^{\alpha} & \\
ab \ar[ur]^{\rho} \ar[rr]_{1\rho} &  & a(bI)
}
\end{equation}
\begin{equation}\label{axiom5}
\cd[@!C@C-6pt]{
& II \ar[dr]^{\lambda} & \\
I \ar[ur]^{\rho} \ar[rr]_{\mathrm{id}} && I\rlap{ .}
}
\end{equation}
We may also say that $(\otimes,I,\alpha,\lambda,\rho)$ is a
\emph{skew-monoidal structure} on $\CA$.  What we call skew-monoidal
is what Szlach\'anyi~\cite{Szl2012} calls left-monoidal, whilst what he calls
right-monoidal we would call \emph{opskew-monoidal}; an
opskew-monoidal structure on $\CA$ is the same as a skew-monoidal
structure on $\CA^\mathrm{op}$.

More generally, we can consider skew monoidales in a monoidal
bicategory. A monoidal bicategory is a one-object tricategory in the
sense of~\cite{GPS}; it thus comprises a bicategory $\CB$ equipped with a
unit object $I$ and tensor product homomorphism $\otimes \colon \CB
\times \CB \to \CB$ which is associative and unital only up to
pseudonatural equivalences $\mathfrak a$, $\mathfrak l$ and $\mathfrak
r$. The coherence of these equivalences is witnessed by invertible
modifications $\pi, \mu, \sigma$ and $\tau$, whose components are
$2$-cells with boundaries those of the
axioms~\eqref{axiom1}--\eqref{axiom2} above, and an invertible
$2$-cell $\theta$ whose boundary is that of \eqref{axiom5}. The
modifications $\pi, \mu, \sigma$ and $\tau$ are as in~\cite{GPS},
though we write $\sigma$ and $\tau$ for what there are called
$\lambda$ and $\rho$; whilst $\theta \colon \mathfrak r_I \circ \mathfrak l_I \Rightarrow 1_I \colon I \to I$ can be defined from the
remaining coherence data as the composite:
\begin{equation*}
\begin{tikzpicture}[y=0.65pt, x=0.65pt,yscale=-1, inner sep=0pt, outer sep=0pt, every text node part/.style={font=\scriptsize} ]
  \path[use as bounding box] (-30, 930) rectangle (235,1045);  \path[draw=black,line join=miter,line cap=butt,line width=0.650pt]
  (40.0000,1012.3622) .. controls (60.0000,1052.3622) and (140.0000,1029.0471) ..
  node[above right=0.1cm,pos=0.37] {$\mathfrak l$}(140.0000,1012.3622) .. controls (140.0000,997.2450) and (100.0000,997.8136) ..
  node[above=0.07cm] {$\mathfrak l^\centerdot$}  (100.0000,1012.3622) .. controls (100.0000,1017.3926) and (106.3215,1022.7556) ..
  (115.8549,1026.9783)(122.9680,1029.7141) .. controls (148.2131,1038.1193) and (186.9261,1038.5100) ..
  node[above left=0.1cm,pos=0.5] {$1 \mathfrak l$}(200.0000,1012.3622);
  \path[draw=black,line join=miter,line cap=butt,line width=0.650pt]
  (40.0000,1012.3622) .. controls (80.0000,972.3622) and (160.0000,972.3622) ..
  node[above=0.1cm,pos=0.495] {$\mathfrak a$} (200.0000,1012.3622);
  \path[draw=black,line join=miter,line cap=butt,line width=0.650pt]
  (-29.6956,972.6465) .. controls (-7.5878,972.6465) and (2.5465,981.2164) ..
  node[above right=0.12cm,at start] {\!$\mathfrak l$}(10.0677,990.3944)
  (13.1931,994.3876) .. controls (20.2119,1003.6506) and (25.9496,1012.3622) ..
  node[above right=0.055cm,pos=0.35] {$\mathfrak l 1$}(40.0000,1012.3622)
  (200.0000,1012.3622) .. controls (186.7856,972.7190) and (147.3782,963.6344) ..
  node[above right=0.07cm,pos=0.43] {$\mathfrak r 1$}(122.1590,956.2645)
  (112.8208,953.3027) .. controls (105.0036,950.4894) and (100.0000,947.3640) ..
  (100.0000,942.3622) .. controls (100.0000,927.5293) and (140.0000,927.5293) ..
  node[above=0.1cm] {$\mathfrak r^\centerdot$}(140.0000,942.3622) .. controls (140.0000,963.1400) and (50.2642,952.4640) ..
  node[above=0.1cm,pos=0.57] {$\mathfrak r$} (0.2778,1002.3622) .. controls (-5.7407,1007.0671) and (-10.8148,1012.6400) ..
  node[above right=0.12cm,at end]{\!\!$\mathfrak r$}(-28.0000,1012.6400);
  \path[fill=black] (40,1012.3622) node[circle, draw, line width=0.65pt, minimum width=5mm, fill=white, inner sep=0.25mm] (text2987) {$\sigma$};
  \path[fill=black] (200,1012.3622) node[circle, draw, line width=0.65pt, minimum width=5mm, fill=white, inner sep=0.25mm] (text2991) {$\mu$};
\end{tikzpicture}
\quad \text.
\end{equation*}

Here, and elsewhere in this paper, we use string notation to display
composite $2$-cells in a bicategory, with objects represented by
regions, $1$-cells by strings, and generating $2$-cells by
vertices. We orient our string diagrams with 1-cells proceeding down
the page and 2-cells proceeding from left to right.  If a 1-cell $\xi$
belongs to a specified adjoint equivalence, then we will denote its
specified adjoint pseudoinverse by $\xi^\centerdot$, and as usual with
adjunctions, will draw the unit and counit of the adjoint equivalence
in string diagrams as simple caps and cups. In representing the
monoidal structure of a bicategory, we we notate the tensor product
$\otimes$ by juxtaposition, notate the structural 1-cells $\mathfrak
a, \mathfrak l, \mathfrak r$ and 2-cells $\pi, \nu, \lambda, \rho$
explicitly, and use string crossings to notate pseudonaturality
constraint 2-cells, and also instances of the pseudofunctoriality of
$\otimes$ of the form $(f \otimes 1) \circ (1 \otimes g) \cong (1
\otimes g) \circ (f \otimes 1)$ (the interchange isomorphisms).

With our notational conventions now established, we now define a
\emph{skew monoidale} in a monoidal bicategory $\CB$ to be given by an object
$A \in \CB$, unit and multiplication morphisms $i \colon I \to A$ and $t
\colon A \otimes A \to A$, and (non-invertible) coherence $2$-cells
\begin{equation*} 
 \cd{
    (A \otimes A) \otimes A \rtwocell{drr}{\alpha} \ar[rr]^{\mathfrak a} \ar[d]_{t \otimes A} & &
    A \otimes (A \otimes A) \ar[d]^{A \otimes t} \\
    A \otimes A \ar[r]_t & A & A \otimes A \ar[l]^t
  }
  \ \  \text{and} \ \ 
  \cd{
    I \otimes A \ar[d]_{i \otimes A} \rtwocell{dr}{\lambda}
    \ar[r]^-{\mathfrak l} & 
    A \rtwocell{dr}{\rho} \ar@{=}[d] \ar[r]^-{\mathfrak r}
    & A \otimes I \ar[d]^{A \otimes i} \\
    A \otimes A \ar[r]_-t & A & A \otimes A \ar[l]^-t
  }
\end{equation*}
subject to the following five axioms, the appropriate analogues of~\eqref{axiom1}--\eqref{axiom5}.
\begin{align*}
\vcenter{\hbox{\begin{tikzpicture}[y=0.8pt, x=0.8pt,yscale=-1, inner sep=0pt, outer sep=0pt, every text node part/.style={font=\scriptsize} ]
  \path[draw=black,line join=miter,line cap=butt,line width=0.650pt]
    (0.5761,996.5573) .. controls (25.8299,996.0522) and (35.7165,994.4461) ..
    node[above right=0.12cm,at start] {$\!\!(t  1)  1$}(48.0259,1000.9550);
  \path[draw=black,line join=miter,line cap=butt,line width=0.650pt]
    (0.5761,1020.4270) .. controls (32.7994,1020.6795) and (38.4294,1009.3612) ..
    node[above right=0.12cm,at start] {\!$t  1$}(48.0259,1003.0477);
  \path[draw=black,line join=miter,line cap=butt,line width=0.650pt]
    (49.8969,1000.5044) .. controls (58.2307,980.6554) and (151.0215,979.4784) ..
    node[above right=0.25cm,pos=0.4] {$\mathfrak a  1$}(166.1738,987.8122);
  \path[draw=black,line join=miter,line cap=butt,line width=0.650pt]
    (49.6135,1003.1578) .. controls (60.9777,1006.6933) and (72.9109,1017.9473) ..
    node[below left=0.07cm,pos=0.55]{$t  1$} (80.4466,1027.1123);
  \path[draw=black,line join=miter,line cap=butt,line width=0.650pt]
    (0.5761,1042.0698) .. controls (14.4657,1042.3223) and (75.2981,1040.0698) ..
    node[above right=0.12cm,at start] {\!$t$} (80.8539,1029.7127);
  \path[draw=black,line join=miter,line cap=butt,line width=0.650pt]
    (82.2825,1030.8845) .. controls (90.1821,1054.0933) and (173.6677,1051.1703)
    .. node[above left=0.12cm,at end] {$t$\!}(214.7867,1051.3122);
  \path[draw=black,line join=miter,line cap=butt,line width=0.650pt]
    (141.5590,1019.2256) .. controls (143.0743,1011.6494) and (166.0258,992.5776)
    .. node[below right=0.02cm] {$1  \mathfrak a$}(166.0258,992.5776);
  \path[draw=black,line join=miter,line cap=butt,line width=0.650pt]
    (141.6203,1021.3685) .. controls (141.6203,1021.3685) and (163.4571,1014.0062)
    .. node[above left=0.12cm,at end] {$1  (1  t)\!\!$}(214.7868,1014.7638);
  \path[draw=black,line join=miter,line cap=butt,line width=0.650pt]
    (141.4111,1023.0061) .. controls (141.4111,1023.0061) and (142.5493,1033.5038)
    .. node[above left=0.12cm,at end] {$1  t$\!}(214.7868,1033.7564);
  \path[draw=black,line join=miter,line cap=butt,line width=0.650pt]
    (167.5590,988.4831) .. controls (167.5590,988.4831) and (177.6142,980.4754) ..
    node[above left=0.12cm,at end] {$\mathfrak a$\!} (214.7868,979.9703);
  \path[draw=black,line join=miter,line cap=butt,line width=0.650pt]
    (167.9162,991.3403) .. controls (172.8729,995.3375) and (185.9479,996.5546) ..
    node[above left=0.12cm,at end] {$\mathfrak a$\!}(214.7867,996.5546);
  \path[draw=black,line join=miter,line cap=butt,line width=0.650pt]
    (82.5536,1028.3546) .. controls (96.1906,1037.3648) and (132.5520,1035.8496)
    .. node[below right=0.12cm,pos=0.42] {$1  t$} (139.8756,1022.0412);
  \path[draw=black,line join=miter,line cap=butt,line width=0.650pt]
    (82.7084,1025.8212) .. controls (84.1609,1008.5230) and (138.1030,990.5459) ..
    node[above left=0.07cm,pos=0.6] {$\mathfrak a$}(166.0786,990.2620)
    (49.8145,1001.8228) .. controls (60.2802,1001.0663) and (79.7259,1003.0374) ..
    node[above right=0.08cm,pos=0.4,rotate=-9] {$(1 t)  1$}(98.0839,1006.4270)
    (105.9744,1007.9878) .. controls (121.2679,1011.2244) and (134.6866,1015.3648) ..
    node[below left=0.05cm,pos=0.68,rotate=-18] {$1  (t  1)$}(139.8779,1019.5827);
  \path[fill=black] (166.83624,990.0094) node[circle, draw, line width=0.65pt,
    minimum width=5mm, fill=white, inner sep=0.25mm] (text3313) {$\pi$    };
  \path[fill=black] (48.501251,1001.512) node[circle, draw, line width=0.65pt,
    minimum width=5mm, fill=white, inner sep=0.25mm] (text3305) {$\alpha  1$    };
  \path[fill=black] (81.600487,1027.7435) node[circle, draw, line width=0.65pt,
    minimum width=5mm, fill=white, inner sep=0.25mm] (text3309) {$\alpha$    };
  \path[fill=black] (140.38065,1021.0309) node[circle, draw, line width=0.65pt,
    minimum width=5mm, fill=white, inner sep=0.25mm] (text3317) {$1  \alpha$  };
\end{tikzpicture}}}
\quad\ \  &= \quad\ \
\vcenter{\hbox{\begin{tikzpicture}[y=0.8pt, x=0.7pt,yscale=-1, inner sep=0pt, outer sep=0pt, every text node part/.style={font=\scriptsize} ]
  \path[draw=black,line join=miter,line cap=butt,line width=0.650pt]
    (0.5761,1010.9221) .. controls (22.7994,1011.1746) and (33.5330,1012.5294) ..
    node[above right=0.12cm,at start] {\!$t  1$}(49.1780,1027.2420);
  \path[draw=black,line join=miter,line cap=butt,line width=0.650pt]
    (0.5761,1042.0698) .. controls (14.4657,1042.3223) and (43.6151,1040.3578) ..
    node[above right=0.12cm,at start] {\!$t$}(49.1709,1030.0007);
  \path[draw=black,line join=miter,line cap=butt,line width=0.650pt]
    (51.1755,1030.5965) .. controls (67.4280,1056.6856) and (126.7968,1041.6654)
    .. node[below right=0.14cm] {$t$} (139.8511,1033.1665);
  \path[draw=black,line join=miter,line cap=butt,line width=0.650pt]
    (142.8500,1030.5854) .. controls (142.8500,1030.5854) and (184.5612,1028.3914)
    .. node[above left=0.12cm,at end] {$1  t$\!}(225.8909,1029.1490);
  \path[draw=black,line join=miter,line cap=butt,line width=0.650pt]
    (142.6407,1033.9511) .. controls (142.6407,1033.9511) and (172.9964,1047.0248)
    .. node[above left=0.12cm,at end] {$t$\!}(225.8909,1046.9894);
  \path[draw=black,line join=miter,line cap=butt,line width=0.650pt]
    (51.8895,1026.1092) .. controls (64.3077,997.3630) and (142.1189,975.3128) ..
    node[above left=0.12cm,at end] {$\mathfrak a$\!}(225.8909,975.6050)
    (52.8868,1028.6427) .. controls (64.4224,1023.3508) and (87.4860,1019.3533) ..
    node[below right=0.1cm,pos=0.53] {$1  t$} (113.0660,1016.4776)
    (121.4833,1015.5795) .. controls (135.8648,1014.1237) and (150.7159,1013.0048) ..
    node[above=0.1cm] {$1  t$} (164.5150,1012.1936)
    (173.4370,1011.7035) .. controls (191.3809,1010.7870) and (206.7880,1010.4220) ..
    node[above left=0.12cm,at end] {$1  (1  t)\!\!$} (225.8909,1010.5363)
    (143.3647,1028.4425) .. controls (173.9707,1013.3776) and (183.0558,992.0340) ..
    node[above left=0.12cm,at end] {$\mathfrak a$\!}(225.8909,992.6100)
    (1.1521,981.0038) .. controls (30.9106,980.4472) and (58.1432,987.9204) ..
    node[above right=0.12cm,at start] {$\!\!(t  1)  1$}(81.9942,998.1170)
    (88.1242,1000.8286) .. controls (108.0234,1009.9228) and (125.3927,1020.6737) ..
    node[above right=0.1cm,pos=0.22] {$t  1$}
    (139.6961,1029.7577);
  \path[fill=black] (49.917496,1028.8956) node[circle, draw, line width=0.65pt,
    minimum width=5mm, fill=white, inner sep=0.25mm] (text3309) {$\alpha$    };
  \path[fill=black] (139.03424,1031.976) node[circle, draw, line width=0.65pt,
    minimum width=5mm, fill=white, inner sep=0.25mm] (text3317) {$\alpha$  };
\end{tikzpicture}}}\\[6pt]
\vcenter{\hbox{\begin{tikzpicture}[y=0.8pt, x=0.7pt,yscale=-1, inner sep=0pt, outer sep=0pt, every text node part/.style={font=\scriptsize} ]
  \path[draw=black,line join=miter,line cap=butt,line width=0.650pt] (20.0000,912.3622) .. controls (43.4793,912.3622) and (63.3478,901.7100) .. 
  node[above right=0.12cm,at start] {\!$\tee$}(64.6522,898.8839);
  \path[draw=black,line join=miter,line cap=butt,line width=0.650pt] (67.0808,898.9849) .. controls (78.1440,914.6333) and (153.3281,912.3622) .. 
  node[above left=0.12cm,at end] {$\tee$\!}(180.0000,912.3622);
  \path[draw=black,line join=miter,line cap=butt,line width=0.650pt] (35.0870,872.9554) .. controls (37.6957,884.2598) and (62.6957,891.0578) ..
  node[below left=0.03cm,pos=0.5] {$\tee 1$}(64.4348,894.3187);
  \path[draw=black,line join=miter,line cap=butt,line width=0.650pt] (35.3307,867.7426) .. controls (44.1089,854.4448) and (158.1673,850.3255) .. 
  node[above=0.07cm,pos=0.5] {$\mathfrak r 1$}(166.2328,860.9103);
  \path[draw=black,line join=miter,line cap=butt,line width=0.650pt] (67.9130,896.4926) .. controls (67.9130,896.4926) and (120.2302,898.0607) .. 
    node[above=0.065cm,pos=0.45] {$1t$}(127.6087,893.4491);
  \path[draw=black,line join=miter,line cap=butt,line width=0.650pt] (36.3058,870.5159) .. controls (50.8051,868.6788) and (66.2099,872.3069) .. 
  node[above right=0.03cm,pos=0.5,rotate=-15] {$(1 \eye)1$}(81.0828,876.9756)
  (87.8061,879.1556) .. controls (102.1676,883.9328) and (115.8114,889.0641) .. 
  node[above right=0.03cm,pos=0.22,rotate=-18] {$1 (\eye 1)$}(127.3913,890.4057)
  (67.1072,894.2441) .. controls (74.8719,875.7718) and (111.4903,864.4336) .. 
  node[above left=0.05cm,pos=0.7] {$\mathfrak a$}(164.3696,863.8187);
  \path[draw=black,line join=miter,line cap=butt,line width=0.650pt] (130.6522,892.5796) .. controls (138.3381,891.9647) and (163.9007,873.7454) .. 
  node[below right=0.03cm,pos=0.5] {$1 \mathfrak l$}(165.8990,866.5206);
  \path[fill=black] (65.479271,896.49127) node[circle, draw, line width=0.65pt, minimum width=5mm, fill=white, inner sep=0.25mm] (text4130) {$\alpha$     };
  \path[fill=black] (34.740463,870.51276) node[circle, draw, line width=0.65pt, minimum width=5mm, fill=white, inner sep=0.25mm] (text4134) {$\rho 1$     };
  \path[fill=black] (128.51724,892.49457) node[circle, draw, line width=0.65pt, minimum width=5mm, fill=white, inner sep=0.25mm] (text4138) {$1 \lambda$     };
  \path[fill=black] (166.03291,864.05157) node[circle, draw, line width=0.65pt, minimum width=5mm, fill=white, inner sep=0.25mm] (text4142) {$\mu$   };
\end{tikzpicture}}} \quad \ &= \ \quad 
\vcenter{\hbox{\begin{tikzpicture}[y=0.7pt, x=0.7pt,yscale=-1, inner sep=0pt, outer sep=0pt, every text node part/.style={font=\scriptsize} ]
  \path[draw=black,line join=miter,line cap=butt,line width=0.650pt] (30.0000,1022.3622) -- 
  node[above right=0.12cm,at start] {\!$t$}node[above left=0.12cm,at end] {$t$\!}(150.0000,1022.3622);
\end{tikzpicture}}}\\[6pt]
\vcenter{\hbox{\begin{tikzpicture}[y=0.7pt, x=0.7pt,yscale=-1, inner sep=0pt, outer sep=0pt, every text node part/.style={font=\scriptsize} ]
  \path[draw=black,line join=miter,line cap=butt,line width=0.650pt] (30.0000,992.3622) .. controls (49.9858,992.3622) and (57.9934,997.7506) .. 
  node[above right=0.12cm,at start] {\!$\tee 1$}(60.4529,1001.1324);
  \path[draw=black,line join=miter,line cap=butt,line width=0.650pt] (30.0000,1012.3622) .. controls (50.0425,1012.3622) and (58.4628,1009.5953) .. 
  node[above right=0.12cm,at start] {\!$t$}(60.9223,1006.2135);
  \path[draw=black,line join=miter,line cap=butt,line width=0.650pt] (30.0000,972.3622) .. controls (52.9363,972.3622) and (72.2458,974.7838) .. 
  node[above right=0.12cm,at start] {\!\!$(\eye 1) 1$}(87.1746,980.7689)
  (93.5809,983.6996) .. controls (104.9262,989.6034) and (113.1445,998.1058) .. 
  node[above right=0.03cm,pos=0.38] {$\eye 1$}(117.7983,1009.8696)
  (63.7389,1000.0316) .. controls (69.5802,988.6564) and (102.3454,972.3622) .. 
  node[above left=0.12cm,at end] {$\mathfrak a$\!}(150.0000,972.3622)
  (64.9686,1004.3523) .. controls (78.6374,1006.2162) and (94.1546,1003.1361) .. 
  node[above=0.07cm,pos=0.5] {$1\tee$}(109.2174,999.6320)
  (114.4978,998.3882) .. controls (127.3619,995.3357) and (139.6875,992.3622) .. 
  node[above left=0.12cm,at end] {$1 \tee$\!}(150.0000,992.3622);
  \path[draw=black,line join=miter,line cap=butt,line width=0.650pt] (120.0000,1012.3622) -- 
  node[above left=0.12cm,at end] {$\mathfrak l$\!}(150.0000,1012.3622);
  \path[draw=black,line join=miter,line cap=butt,line width=0.650pt] (63.9470,1007.7837) .. controls (75.0666,1019.6587) and (106.1606,1024.4623) .. 
  node[below=0.05cm,pos=0.5] {$\tee$}(117.7486,1013.9325);
  \path[fill=black] (62.102421,1003.787) node[circle, draw, line width=0.65pt, minimum width=5mm, fill=white, inner sep=0.25mm] (text3566) {$\alpha$
     };
  \path[fill=black] (118.9784,1012.3953) node[circle, draw, line width=0.65pt, minimum width=5mm, fill=white, inner sep=0.25mm] (text3570) {$\lambda$
   };
\end{tikzpicture}}} \quad \ &= \ \quad 
\vcenter{\hbox{\begin{tikzpicture}[y=0.7pt, x=0.7pt,yscale=-1, inner sep=0pt, outer sep=0pt, every text node part/.style={font=\scriptsize} ]
  \path[draw=black,line join=miter,line cap=butt,line width=0.650pt] (30.0000,912.3622) .. controls (50.6075,912.3622) and (57.8287,917.4085) .. 
  node[above right=0.12cm,at start] {\!\!$(\eye 1) 1$}(59.9808,919.8680);
  \path[draw=black,line join=miter,line cap=butt,line width=0.650pt] (30.0000,932.3622) .. controls (47.8314,932.3622) and (57.9934,928.6729) .. 
  node[above right=0.12cm,at start] {\!$\tee 1$}(60.4529,924.6762);
  \path[draw=black,line join=miter,line cap=butt,line width=0.650pt] (103.7058,920.3556) .. controls (106.1653,914.5142) and (134.3086,912.3622) .. 
  node[above left=0.12cm,at end] {$\mathfrak a$\!}(150.0000,912.3622);
  \path[draw=black,line join=miter,line cap=butt,line width=0.650pt] (30.0000,952.3622) .. controls (76.4799,952.3622) and (98.0970,945.8975) ..
  node[above right=0.12cm,at start] {\!$\tee$}(114.2384,940.3189)(119.5946,938.4494) .. controls (129.3527,935.0470) and (137.7583,932.3622) ..
  node[above left=0.12cm,at end] {$1 \tee$\!}(150.0000,932.3622)(103.7058,925.3077) .. controls (109.5471,929.3043) and (120.4844,952.3622) .. 
  node[above left=0.12cm,at end] {$\mathfrak l$\!}(150.0000,952.3622);
  \path[draw=black,line join=miter,line cap=butt,line width=0.650pt] (62.2980,922.3160) -- 
  node[above=0.09cm,pos=0.5] {$\mathfrak l 1$}(100.3701,922.3160);
  \path[fill=black] (60.869564,922.14478) node[circle, draw, line width=0.65pt, minimum width=5mm, fill=white, inner sep=0.25mm] (text3884) {$\lambda 1$
     };
  \path[fill=black] (102.13043,922.14478) node[circle, draw, line width=0.65pt, minimum width=5mm, fill=white, inner sep=0.25mm] (text3888) {$\sigma$
   };
\end{tikzpicture}}}\\[6pt]
\vcenter{\hbox{\begin{tikzpicture}[y=0.7pt, x=0.7pt,yscale=-1, inner sep=0pt, outer sep=0pt, every text node part/.style={font=\scriptsize} ]
  \path[draw=black,line join=miter,line cap=butt,line width=0.650pt] 
  (110.4694,936.4878) .. controls (114.7736,934.3357) and (120.0000,922.3622) .. 
  node[above left=0.12cm,at end] {$\mathfrak a$\!}(150.0000,922.3622);
  \path[draw=black,line join=miter,line cap=butt,line width=0.650pt] 
  (45.6001,918.6729) .. controls (63.5753,918.6729) and (75.6670,917.2210) .. 
  node[above=0.07cm,pos=0.45] {$1i$}(85.1137,915.1836)
  (90.1837,913.9779) .. controls (108.7191,909.1594) and (117.7227,902.3622) .. 
  node[above left=0.12cm,at end] {$1 i$\!}(150.0000,902.3622)
  (45.9075,915.2580) .. controls (54.5158,896.5043) and (101.7078,882.3622) .. 
  node[above left=0.12cm,at end] {$\mathfrak r$\!}(150.0000,882.3622)
  (30.0000,882.3622) .. controls (49.1430,882.2684) and (61.6720,886.9527) .. 
  node[above right=0.12cm,at start] {\!$t$}(70.5131,893.6222)
  (74.8228,897.2828) .. controls (80.1942,902.3978) and (83.9960,908.2524) .. 
  (87.2443,913.8769) .. controls (93.7170,925.0845) and (97.9916,935.3785) .. 
  node[above right=0.055cm,pos=0.45] {$\tee 1$}(108.1057,937.0861);
  \path[draw=black,line join=miter,line cap=butt,line width=0.650pt] 
  (110.7769,939.5952) .. controls (116.0033,940.8250) and (132.7835,942.3622) ..
  node[above left=0.12cm,at end] {$1 \tee$\!}(150.0000,942.3622);
  \path[draw=black,line join=miter,line cap=butt,line width=0.650pt] 
  (110.4694,942.8316) .. controls (113.5438,951.7473) and (131.8377,962.3622) ..
  node[above left=0.12cm,at end] {$\tee$\!}(150.0000,962.3622);
  \path[draw=black,line join=miter,line cap=butt,line width=0.650pt] 
  (46.1157,921.3937) .. controls (51.6495,932.1540) and (83.9305,950.6003) .. 
    node[above right=0.055cm,pos=0.45] {$\tee$}(108.5255,941.3772);
  \path[fill=black] (42.733845,918.9342) node[circle, draw, line width=0.65pt, minimum width=5mm, fill=white, inner sep=0.25mm] (text3236) {$\rho$ };
  \path[fill=black] (109.44783,939.83997) node[circle, draw, line width=0.65pt, minimum width=5mm, fill=white, inner sep=0.25mm] (text3240) {$\alpha$ };
\end{tikzpicture}}} \quad \ &= \ \quad 
\vcenter{\hbox{\begin{tikzpicture}[y=0.7pt, x=0.7pt,yscale=-1, inner sep=0pt, outer sep=0pt, every text node part/.style={font=\scriptsize} ]
  \path[draw=black,line join=miter,line cap=butt,line width=0.650pt] (30.0000,1022.3622) -- 
  node[above right=0.12cm,at start] {\!$t$}node[above left=0.12cm,at end] {$t$\!}(150.0000,1022.3622);
  \path[draw=black,line join=miter,line cap=butt,line width=0.650pt] (150.0000,942.3622) .. controls (126.0199,942.3622) and (109.2231,948.0580) .. 
  node[above left=0.12cm,at start] {$\mathfrak r$\!}(106.1488,953.5919);
  \path[draw=black,line join=miter,line cap=butt,line width=0.650pt] (150.0000,982.3622) .. controls (124.4568,981.7473) and (110.0000,967.8961) ..
  node[above left=0.12cm,at start] {$\mathfrak a$\!}(105.6959,957.7506)
  (150.0000,962.3622) .. controls (139.9401,962.2395) and (129.7698,965.4711) .. 
  node[above left=0.12cm,at start] {$1 \eye$\!}(118.1710,969.3606)
  (112.6296,971.2200) .. controls (96.3251,976.6597) and (77.0420,982.5314) .. 
  node[below=0.05cm,pos=0.4] {$1(1 \eye)$}(51.5372,982.2002);
  \path[draw=black,line join=miter,line cap=butt,line width=0.650pt] (150.0000,1002.3622) .. controls (111.2519,1002.3622) and (62.6049,999.7407) .. 
  node[above left=0.12cm,at start] {$1 \tee$\!}(51.2298,985.2911);
  \path[draw=black,line join=miter,line cap=butt,line width=0.650pt] (103.2991,956.1342) .. controls (87.6198,956.4416) and (57.1834,965.0499) .. 
    node[above left=0.055cm,pos=0.45] {$1 \mathfrak r$}(51.3421,978.5771);
  \path[fill=black] (48.575161,981.65149) node[circle, draw, line width=0.65pt, minimum width=5mm, fill=white, inner sep=0.25mm] (text3352) {$1\rho$
     };
  \path[fill=black] (104.83627,955.82672) node[circle, draw, line width=0.65pt, minimum width=5mm, fill=white, inner sep=0.25mm] (text3356) {$\tau$
   };
\end{tikzpicture}}}\\[6pt]
\vcenter{\hbox{\begin{tikzpicture}[y=0.7pt, x=0.7pt,yscale=-1, inner sep=0pt, outer sep=0pt, every text node part/.style={font=\scriptsize} ]
  \path[draw=black,line join=miter,line cap=butt,line width=0.650pt] 
  (34.0000,882.3622) .. controls (53.5806,882.2663) and (65.0541,887.1694) .. 
  node[above right=0.12cm,at start] {\!$\eye$}(72.8455,894.0826)
  (76.6782,897.9452) .. controls (81.0329,902.8997) and (84.1354,908.4937) .. 
  node[above right=0.04cm,pos=0.77] {$\eye 1$}(87.2443,913.8769) .. controls (93.7170,925.0845) and (97.9916,935.3785) .. (108.1057,937.0861)
  (45.9075,915.2580) .. controls (54.5158,896.5043) and (126.9177,877.7506) .. 
  node[above=0.07cm,pos=0.5] {$\mathfrak r$}(140.1620,884.8217)
  (45.6001,918.6729) .. controls (62.8702,918.6729) and (75.6953,918.1544) .. 
  node[above=0.09cm,pos=0.5] {$1 \eye$}(86.3824,917.4146)
  (91.8786,917.0018) .. controls (105.3036,915.9160) and (115.6132,914.5265) .. 
  node[above=0.09cm,pos=0.59] {$1 \eye$}(128.3871,913.5523)
  (133.7731,913.1782) .. controls (141.7041,912.6822) and (150.7402,912.3622) .. 
  node[above left=0.12cm,at end] {$\eye$\!}(162.0000,912.3622)
  (111.0843,939.2547) .. controls (131.3752,934.0283) and (133.2198,896.5374) .. 
  node[below right=0.05cm,pos=0.34] {$\mathfrak l$}(139.5471,888.5440);
  \path[draw=black,line join=miter,line cap=butt,line width=0.650pt] (46.1157,921.3937) .. controls (51.6495,932.1540) and (83.9305,950.6003) .. 
  node[below left=0.05cm,pos=0.59] {$\tee$}(108.5255,941.3772);
  \path[fill=black] (42.733845,918.9342) node[circle, draw, line width=0.65pt, minimum width=5mm, fill=white, inner sep=0.25mm] (text3236) {$\rho$
     };
  \path[fill=black] (109.44783,939.83997) node[circle, draw, line width=0.65pt, minimum width=5mm, fill=white, inner sep=0.25mm] (text3240) {$\lambda$
     };
  \path[fill=black] (141.11392,887.26813) node[circle, draw, line width=0.65pt, minimum width=5mm, fill=white, inner sep=0.25mm] (text4834) {$\theta$
   };
\end{tikzpicture}}} \quad \ &= \ \quad 
\vcenter{\hbox{\begin{tikzpicture}[y=0.7pt, x=0.7pt,yscale=-1, inner sep=0pt, outer sep=0pt, every text node part/.style={font=\scriptsize} ]
  \path[draw=black,line join=miter,line cap=butt,line width=0.650pt] (30.0000,1022.3622) -- 
  node[above right=0.12cm,at start] {\!$\eye$}node[above left=0.12cm,at end] {$\eye$\!}(150.0000,1022.3622);\end{tikzpicture}}}
\rlap{ .}\end{align*}


Note that a skew monoidale in the monoidal
bicategory $(\mathbf{Cat}, \times, 1)$ is precisely a skew-monoidal
category, whilst a skew monoidale in the $2$-cell dual
$(\mathbf{Cat}^\mathrm{co}, \times, 1)$ is an opskew-monoidal category.

\section{Nerves of monoidal bicategories}\label{sec:nerves}
Before describing the simplicial set $\mathbb C$ that
classifies skew-monoidal categories, and more generally,
skew monoidales in a monoidal bicategory, we will first describe the
nerve construction by which we will assign a simplicial set $\mathrm N
\CB$ to a given monoidal bicategory $\CB$; the classification of
skew monoidales in $\CB$ will then be in terms of simplicial maps
$\mathbb C \to \mathrm N\CB$.

First let us recall some basic definitions.  We write $\Delta$
for the simplicial category; the objects are $[n] = \{0,\dots,n\}$ for
$n\ge 0$ and the morphisms are order-preserving functions.  Objects
$X$ of $\mathrm{SSet} = [\Delta^{\mathrm{op}}, \mathrm{Set}]$ are
called {\em simplicial sets}; we write $X_n$ for $X([n])$ and call its
elements \emph{$n$-simplices} of $X$. We use the notation $d_{i}
\colon X_n \to X_{n-1}$ and $s_i \colon X_n \to X_{n+1}$ for the face
and degeneracy maps, induced by acting on $X$ by the maps $\delta_i
\colon [n-1]\to [n]$ and $\sigma_{i} \colon [n+1]\to [n]$ of $\Delta$,
the respective injections and surjections for which
$\delta_{i}^{-1}(i) = \emptyset$ and $\sigma_i^{-1}(i) = \{i, i+1\}$.
An $(n+1)$-simplex $x$ is called {\em degenerate} when it is in the image
of some $s_i$, and \emph{non-degenerate} otherwise.

A simplicial set is called \emph{$r$-coskeletal} when it lies in the
image of the right Kan extension functor $[(\Delta^{(r)})^\mathrm{op},
\mathrm{Set}] \to [\Delta^\mathrm{op}, \mathrm{Set}]$, where
$\Delta^{(r)} \subset \Delta$ is the full subcategory on those $[n]$
with $n\le r$.  In elementary terms, a simplicial set is
$r$-coskeletal when every $n$-boundary with $n > r$ has a unique
filler; here, an \emph{$n$-boundary} in a simplicial set is a
collection of $(n-1)$-simplices $(x_0, \dots, x_n)$ satisfying $d_j(x_i)
 = d_i(x_{j+1})$ for all $0 \leqslant i \leqslant j < n$;
a \emph{filler} for such a boundary is an $n$-simplex $x$ with $d_i(x)
 = x_i$ for $i = 0, \dots, n$.

As we noted above, a monoidal bicategory is a one-object tricategory
in the sense of~\cite{GPS}. There are several known constructions of
nerves for tricategories; the one of interest to us is essentially
Street's $\omega$-categorical nerve~\cite{aos}, restricted from dimension
$\omega$ to dimension $3$, and generalised from strict to weak
$3$-categories. An explicit description of this nerve is given
in~\cite{GRoT}; we now reproduce the details for the case of a monoidal
bicategory $\CB$. For such a $\CB$, the \emph{nerve} $\mathrm N\CB$ is the simplicial
set defined as follows:
\begin{itemize}[nolistsep]
\item There is a unique $0$-simplex, denoted $\star$.
\item A $1$-simplex is an object $A_{01}$ of $\CB$; its two faces are
  necessarily $\star$.
\item A $2$-simplex is given by objects $A_{12}, A_{02}, A_{01}$ of
  $\CB$ together with a $1$-cell ${A}_{012} \colon {A}_{12} \otimes {A}_{01} \to
  {A}_{02}$; its three faces are $A_{12}$, $A_{02}$, and $A_{01}$.
\item A $3$-simplex is given by:
\begin{itemize}[noitemsep]
\item Objects ${A}_{ij}$ for each $0 \leqslant i < j
  \leqslant 3$;
\item $1$-cells ${A}_{ijk} \colon {A}_{jk} \otimes {A}_{ij} \to {A}_{ik}$ for
  each $0 \leqslant i < j < k \leqslant 3$;
\item A $2$-cell
  \begin{equation*}
 \cd{
    ({A}_{23} \otimes {A}_{12}) \otimes {A}_{01} \rtwocell{drr}{{A}_{0123}} \ar[rr]^{\mathfrak a} \ar[d]_{{A}_{123} \otimes 1} & &
    {A}_{23} \otimes ({A}_{12} \otimes {A}_{01}) \ar[d]^{1 \otimes {A}_{012}} \\
    {A}_{13} \otimes {A}_{01} \ar[r]_{{A}_{013}} & {A}_{03} & {A}_{23}
    \otimes {A}_{02}\rlap{ ;} \ar[l]^{{A}_{023}}
  }
  \end{equation*}
\end{itemize}
its four faces are $A_{123}$, $A_{023}$, $A_{013}$ and $A_{012}$.
\item A $4$-simplex is given by:
\begin{itemize}[noitemsep]
\item Objects ${A}_{ij}$ for each $0 \leqslant i < j
  \leqslant 4$;
\item  $1$-cells ${A}_{ijk} \colon {A}_{jk} \otimes {A}_{ij} \to {A}_{ik}$ for
  each $0 \leqslant i < j < k \leqslant 4$;
\item $2$-cells ${A}_{ijk\ell} \colon {A}_{ij\ell} \circ ({A}_{jk\ell}
  \otimes 1) \Rightarrow {A}_{ik\ell} \circ (1 \otimes {A}_{ijk}) \circ
  \mathfrak a$ for each $0 \leqslant i < j < k < \ell \leqslant 4$
\end{itemize}
such that the $2$-cell equality
\begin{gather*}
\vcenter{\hbox{\begin{tikzpicture}[y=1pt, x=1.1pt,yscale=-1, inner sep=0pt, outer sep=0pt, every text node part/.style={font=\scriptsize} ]
  \path[draw=black,line join=miter,line cap=butt,line width=0.650pt]
    (0.5761,996.5573) .. controls (25.8299,996.0522) and (35.7165,994.4461) ..
    node[above right=0.15cm,at start] {$\!\!({A}_{234}  1)  1$}(48.0259,1000.9550);
  \path[draw=black,line join=miter,line cap=butt,line width=0.650pt]
    (0.5761,1020.4270) .. controls (32.7994,1020.6795) and (38.4294,1009.3612) ..
    node[above right=0.17cm,at start] {$\!\!{A}_{124}  1$}(48.0259,1003.0477);
  \path[draw=black,line join=miter,line cap=butt,line width=0.650pt]
    (49.8969,1000.5044) .. controls (58.2307,980.6554) and (151.0215,979.4784) ..
    node[above right=0.25cm,pos=0.4] {$\mathfrak a  1$}(166.1738,987.8122);
  \path[draw=black,line join=miter,line cap=butt,line width=0.650pt]
    (49.6135,1003.1578) .. controls (60.9777,1006.6933) and (72.9109,1017.9473) ..
    node[below left=0.05cm,pos=0.57]{${A}_{134}  1$} (80.4466,1027.1123);
  \path[draw=black,line join=miter,line cap=butt,line width=0.650pt]
    (0.5761,1042.0698) .. controls (14.4657,1042.3223) and (75.2981,1040.0698) ..
    node[above right=0.15cm,at start] {$\!\!{A}_{014}$} (80.8539,1029.7127);
  \path[draw=black,line join=miter,line cap=butt,line width=0.650pt]
    (82.2825,1030.8845) .. controls (90.1821,1054.0933) and (173.6677,1051.1703)
    .. node[above left=0.12cm,at end] {${A}_{034}$}(214.7867,1051.3122);
  \path[draw=black,line join=miter,line cap=butt,line width=0.650pt]
    (141.5590,1019.2256) .. controls (143.0743,1011.6494) and (166.0258,992.5776)
    .. node[below right=0.04cm] {$1  \mathfrak a$}(166.0258,992.5776);
  \path[draw=black,line join=miter,line cap=butt,line width=0.650pt]
    (141.6203,1021.3685) .. controls (141.6203,1021.3685) and (163.4571,1014.0062)
    .. node[above left=0.12cm,at end] {$1  (1  {A}_{012})\!\!$}(214.7868,1014.7638);
  \path[draw=black,line join=miter,line cap=butt,line width=0.650pt]
    (141.4111,1023.0061) .. controls (141.4111,1023.0061) and (142.5493,1033.5038)
    .. node[above left=0.12cm,at end] {$1  {A}_{023}$}(214.7868,1033.7564);
  \path[draw=black,line join=miter,line cap=butt,line width=0.650pt]
    (167.5590,988.4831) .. controls (167.5590,988.4831) and (177.6142,980.4754) ..
    node[above left=0.12cm,at end] {$\mathfrak a$} (214.7868,979.9703);
  \path[draw=black,line join=miter,line cap=butt,line width=0.650pt]
    (167.9162,991.3403) .. controls (172.8729,995.3375) and (185.9479,996.5546) ..
    node[above left=0.12cm,at end] {$\mathfrak a$}(214.7867,996.5546);
  \path[draw=black,line join=miter,line cap=butt,line width=0.650pt]
    (82.5536,1028.3546) .. controls (96.1906,1037.3648) and (132.5520,1035.8496)
    .. node[below right=0.12cm,pos=0.42] {$1  {A}_{013}$} (139.8756,1022.0412);
  \path[draw=black,line join=miter,line cap=butt,line width=0.650pt]
    (82.7084,1025.8212) .. controls (84.1609,1008.5230) and (138.1030,990.5459) ..
    node[above left=0.07cm,pos=0.6] {$\mathfrak a$}(166.0786,990.2620)
    (49.8145,1001.8228) .. controls (60.2802,1001.0663) and (79.7259,1003.0374) ..
    node[above right=0.08cm,pos=0.4,rotate=-9] {$(1  {A}_{123})  1$}(98.0839,1006.4270)
    (105.9744,1007.9878) .. controls (121.2679,1011.2244) and (134.6866,1015.3648) ..
    node[below left=0.08cm,pos=0.7,rotate=-15] {$1  ({A}_{123}  1)$}(139.8779,1019.5827);
  \path[fill=black] (166.83624,990.0094) node[circle, draw, line width=0.65pt,
    minimum width=5mm, fill=white, inner sep=0.25mm] (text3313) {$\ \ 
      \pi\ \ $    };
  \path[fill=black] (48.501251,1001.512) node[circle, draw, line width=0.65pt,
    minimum width=5mm, fill=white, inner sep=0.25mm] (text3305) {${A}_{1234}  1$    };
  \path[fill=black] (81.600487,1027.7435) node[circle, draw, line width=0.65pt,
    minimum width=5mm, fill=white, inner sep=0.25mm] (text3309) {${A}_{0134}$    };
  \path[fill=black] (145.38065,1021.0309) node[circle, draw, line width=0.65pt,
    minimum width=5mm, fill=white, inner sep=0.25mm] (text3317) {$1  {A}_{0123}$  };
\end{tikzpicture}}} \\ = \\
\vcenter{\hbox{\begin{tikzpicture}[y=1pt, x=1pt,yscale=-1, inner sep=0pt, outer sep=0pt, every text node part/.style={font=\scriptsize} ]
  \path[draw=black,line join=miter,line cap=butt,line width=0.650pt]
    (0.5761,1010.9221) .. controls (22.7994,1011.1746) and (33.5330,1012.5294) ..
    node[above right=0.12cm,at start] {\!${A}_{124}  1$}(49.1780,1027.2420);
  \path[draw=black,line join=miter,line cap=butt,line width=0.650pt]
    (0.5761,1042.0698) .. controls (14.4657,1042.3223) and (43.6151,1040.3578) ..
    node[above right=0.12cm,at start] {\!${A}_{014}$}(49.1709,1030.0007);
  \path[draw=black,line join=miter,line cap=butt,line width=0.650pt]
    (51.1755,1030.5965) .. controls (67.4280,1056.6856) and (126.7968,1041.6654)
    .. node[below right=0.14cm] {${A}_{024}$} (139.8511,1033.1665);
  \path[draw=black,line join=miter,line cap=butt,line width=0.650pt]
    (142.8500,1030.5854) .. controls (142.8500,1030.5854) and (184.5612,1028.3914)
    .. node[above left=0.12cm,at end] {$1  {A}_{023}$}(225.8909,1029.1490);
  \path[draw=black,line join=miter,line cap=butt,line width=0.650pt]
    (142.6407,1033.9511) .. controls (142.6407,1033.9511) and (172.9964,1047.0248)
    .. node[above left=0.12cm,at end] {${A}_{034}$}(225.8909,1046.9894);
  \path[draw=black,line join=miter,line cap=butt,line width=0.650pt]
    (51.8895,1026.1092) .. controls (64.3077,997.3630) and (142.1189,975.3128) ..
    node[above left=0.12cm,at end] {$\mathfrak a$}(225.8909,975.6050)
    (52.8868,1028.6427) .. controls (64.4224,1023.3508) and (87.4860,1019.3533) ..
    node[below right=0.1cm,pos=0.53] {$1  {A}_{012}$} (113.0660,1016.4776)
    (121.4833,1015.5795) .. controls (135.8648,1014.1237) and (150.7159,1013.0048) ..
    node[above=0.1cm] {$1  {A}_{012}$} (164.5150,1012.1936)
    (173.4370,1011.7035) .. controls (191.3809,1010.7870) and (206.7880,1010.4220) ..
    node[above left=0.12cm,at end] {$1  (1  {A}_{012})\!\!$} (225.8909,1010.5363)
    (143.3647,1028.4425) .. controls (173.9707,1013.3776) and (183.0558,992.0340) ..
    node[above left=0.12cm,at end] {$\mathfrak a$}(225.8909,992.6100)
    (1.1521,981.0038) .. controls (30.9106,980.4472) and (58.1432,987.9204) ..
    node[above right=0.12cm,at start] {$\!\!({A}_{234}  1)  1$}(81.9942,998.1170)
    (88.1242,1000.8286) .. controls (108.0234,1009.9228) and (125.3927,1020.6737) ..
    node[above right=0.1cm,pos=0.22] {${A}_{234}  1$}
    (139.6961,1029.7577);
  \path[fill=black] (49.917496,1028.8956) node[circle, draw, line width=0.65pt,
    minimum width=5mm, fill=white, inner sep=0.25mm] (text3309) {${A}_{0124}$    };
  \path[fill=black] (139.03424,1031.976) node[circle, draw, line width=0.65pt,
    minimum width=5mm, fill=white, inner sep=0.25mm] (text3317) {${A}_{0234}$  };
\end{tikzpicture}}}
\end{gather*}
holds. The five faces of this simplex are $A_{1234}$, $A_{0234}$,
$A_{0134}$, $A_{0124}$ and $A_{0123}$.
\item Higher-dimensional simplices are determined by the requirement
  that $\mathrm{N}\CB$ be $4$-coskeletal.
\end{itemize}
It remains to describe the degeneracy operators. The degeneracy of the
unique $0$-simplex is the unit object $I \in \CB$; the two
degeneracies $s_0(A), s_1(A)$ of a
$1$-simplex $A \in \CB$ are the unit constraints $\mathfrak
r^\centerdot \colon A \otimes I \to A$ and $\mathfrak l \colon I
\otimes A \to A$; the three degeneracies $s_0(\gamma), s_1(\gamma)$ and $s_2(\gamma)$ of a $2$-simplex
$\gamma \colon B \otimes C \to A$ are the respective $2$-cells
\begin{equation*}
\vcenter{\hbox{\begin{tikzpicture}[y=0.8pt, x=0.8pt,yscale=-1, inner sep=0pt, outer sep=0pt, every text node part/.style={font=\scriptsize} ]
  \path[draw=black,line join=miter,line cap=butt,line width=0.650pt] 
  (0.0000,902.3622) .. controls (22.2057,902.3622) and (24.4296,862.1105) .. 
  node[above right=0.12cm,at start] {$\!\mathfrak r^\centerdot$}(54.6378,854.8261) .. controls (70.4902,851.0034) and (80.9687,860.6367) .. (63.1815,866.8469)
  (0.0000,882.3622) .. controls (6.5599,882.4187) and (12.3710,883.9537) .. 
  node[above right=0.12cm,at start] {$\!\gamma 1$}(18.0721,886.3023)(22.7007,888.3801) .. controls (40.8765,897.1386) and (59.6979,912.3622) .. 
  node[above left=0.12cm,at end] {$\gamma$\!}(100.0000,912.3622);
  \path[draw=black,line join=miter,line cap=butt,line width=0.650pt] (59.1409,868.1591) .. controls (29.0888,882.3013) and (70.2005,892.3622) .. 
  node[above left=0.12cm,at end] {$1 \mathfrak r^\centerdot$\!}(100.0000,892.3622);
  \path[draw=black,line join=miter,line cap=butt,line width=0.650pt] (62.6764,869.3723) .. controls (70.2525,874.1706) and (91.3989,872.3622) .. 
  node[above left=0.12cm,at end] {$\mathfrak a$\!}(100.0000,872.3622);
  \path[fill=black] (60.710678,868.80756) node[circle, draw, line width=0.65pt, minimum width=5mm, fill=white, inner sep=0.25mm] (text14966) {$\tau$   };
 \end{tikzpicture}}}
\qquad
\vcenter{\hbox{\begin{tikzpicture}[y=0.8pt, x=0.8pt,yscale=-1, inner sep=0pt, outer sep=0pt, every text node part/.style={font=\scriptsize} ]
  \path[draw=black,line join=miter,line cap=butt,line width=0.650pt] (0.0000,892.3622) .. controls (36.8530,892.1096) and (33.3645,857.8413) .. 
  node[above right=0.12cm,at start] {\!$\mathfrak r^\centerdot 1$}(56.4056,853.3108) .. controls (75.1348,849.6282) and (85.9633,861.3740) .. (63.1815,866.8469);
  \path[draw=black,line join=miter,line cap=butt,line width=0.650pt] (0.0000,912.3622) .. controls (29.3000,912.6147) and (43.6628,912.3622) .. 
  node[above left=0.12cm,at end] {$\gamma$\!}node[above right=0.12cm,at start] {\!$\gamma$}(100.0000,912.3622);
  \path[draw=black,line join=miter,line cap=butt,line width=0.650pt] (62.5254,872.1096) .. controls (63.3845,886.3127) and (70.2005,892.3622) .. 
  node[above left=0.12cm,at end] {$1 \mathfrak l$\!}(100.0000,892.3622);
  \path[draw=black,line join=miter,line cap=butt,line width=0.650pt] (62.6764,869.3723) .. controls (70.2525,874.1706) and (91.3989,872.3622) .. 
  node[above left=0.12cm,at end] {$\mathfrak a$\!}(100.0000,872.3622);
  \path[fill=black] (60.963215,870.82788) node[circle, draw, line width=0.65pt, minimum width=5mm, fill=white, inner sep=0.25mm] (text14966) {$\mu^{\!\scriptscriptstyle -1}$
   };
 \end{tikzpicture}}}\qquad\text{and}\qquad
\vcenter{\hbox{\begin{tikzpicture}[y=0.8pt, x=0.8pt,yscale=-1, inner sep=0pt, outer sep=0pt, every text node part/.style={font=\scriptsize} ]
  \path[draw=black,line join=miter,line cap=butt,line width=0.650pt]
  (43.0799,884.7365) .. controls (43.9391,898.9396) and (70.2005,912.3622) .. 
  node[above left=0.12cm,at end] {$\mathfrak l$\!}(100.0000,912.3622)
  (0.0000,902.3622) .. controls (26.8739,902.5938) and (38.5571,900.6887) ..
  node[above right=0.12cm,at start] {\!$\gamma$}(48.0003,898.4810)
  (54.0398,897.0009) .. controls (63.6207,894.6257) and (73.8763,892.3622) .. 
  node[above left=0.12cm,at end] {$1 \gamma$\!}(100.0000,892.3622);
  \path[draw=black,line join=miter,line cap=butt,line width=0.650pt] (43.2310,880.4840) .. controls (49.3612,869.8661) and (91.3989,872.3622) .. 
  node[above left=0.12cm,at end] {$\mathfrak a$\!}(100.0000,872.3622);
  \path[draw=black,line join=miter,line cap=butt,line width=0.650pt] (0.0000,882.3622) -- 
  node[above right=0.12cm,at start] {\!$\mathfrak l 1$}(40.0000,882.3622);
  \path[fill=black] (41.767765,882.36218) node[circle, draw, line width=0.65pt, minimum width=5mm, fill=white, inner sep=0.25mm] (text14966) {$\sigma$
     };
 \end{tikzpicture}}}\rlap{ .}
\end{equation*}
The four degeneracies of a $3$-simplex are
simply the assertions of certain $2$-cell equalities; that these hold
is a consequence of the axioms for a monoidal bicategory. Higher
degeneracies are determined by coskeletality.

\section{The Catalan simplicial set}\label{Cssan}

As mentioned in the introduction, the \emph{Catalan simplicial set} is the nerve of the monoidal category $(\mathbf 2, \vee, 0)$.
The category $\mathbf 2$ has two objects $0$, $1$ and a single morphism $0\to 1$. The tensor for the monoidal structure is disjunction and the unit is $0$.
This is actually a strict monoidal category and so can be regarded as a one-object 2-category $\Sigma \mathbf 2$. 
The Catalan simplicial set $\mathbb C$ is the nerve of this 2-category; its $n$-simplices are normal lax functors $[n] \to \Sigma \mathbf 2$ as in~\cite{aos}.
In section \ref{sec:CatalanClassifies} we will show that $\mathbb C$ classifies skew monoidales in a monoidal bicategory. 

Now consider item (f{}f{}f{}) in Stanley's list~\cite{StanleyEC2} of Catalan sets. These are relations $R \subseteq [n] \times [n]$ that are reflexive, symmetric and have the interpolation property:
\begin{equation*}
(i,k) \text{ and } i\le j \le k \text{ implies } (i,j) \text{ and } (j,k)\ .
\end{equation*}
Let $\mathbb{K}_n$ be the set of such relations. Each $\mathbb K_n$ has $C_n$ elements.

\begin{proposition}\label{catalanbyrelations}
The assignment sending each normal lax functor $F \colon [n] \to \Sigma \mathbf 2$ to 
$$ \{(i,j)\ :\ F(i\le j) = 0\ \vee\ F(j\le i) = 0\}\ $$
is a bijection $N\Sigma\mathbf 2_n \cong \mathbb{K}_n$. Thus $|\mathbb C_n| = C_n$.
\end{proposition}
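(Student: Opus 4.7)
The plan is to unpack what a normal lax functor $F \colon [n] \to \Sigma\mathbf{2}$ amounts to in elementary terms, then check directly that the proposed assignment is a well-defined bijection onto $\mathbb{K}_n$.

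First I would observe that specifying $F$ amounts to giving, for each pair $i\le j$ in $[n]$, an element $F(i\le j)\in\{0,1\}$, subject to two conditions: \emph{normality}, which forces $F(i\le i)=0$, and \emph{laxity} of the comparison $2$-cells, which in the poset $\mathbf 2$ becomes the inequality
\[
F(i\le j)\vee F(j\le k)\ \le\ F(i\le k)\qquad\text{whenever } i\le j\le k.
\]
(There are no higher coherence conditions to worry about, since $\mathbf 2$ has at most one $2$-cell between any parallel pair of $1$-cells.) So the data of $F$ is equivalent to a subset $S_F=\{(i,j) : i\le j,\ F(i\le j)=0\}$ of the order relation of $[n]$ that contains the diagonal and is ``downward closed under interpolation'' in the sense that $(i,k)\in S_F$ and $i\le j\le k$ imply $(i,j),(j,k)\in S_F$.

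Next I would show that this repackaging matches exactly Stanley's description. The relation $R_F = \{(i,j):F(i\le j)=0 \vee F(j\le i)=0\}$ is manifestly symmetric by construction, and it is reflexive because of normality. For the interpolation property, suppose $(i,k)\in R_F$ and $i\le j\le k$; without loss of generality $F(i\le k)=0$, whence the laxity inequality forces $F(i\le j)=0=F(j\le k)$, giving $(i,j),(j,k)\in R_F$. Thus the assignment lands in $\mathbb{K}_n$.

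For bijectivity I would argue by writing down an inverse. Given $R\in\mathbb{K}_n$, define $F_R(i\le j)=0$ if $(i,j)\in R$ and $F_R(i\le j)=1$ otherwise. Reflexivity of $R$ yields normality of $F_R$, and the interpolation property of $R$ is precisely what is needed to verify the laxity inequality: when $F_R(i\le k)=0$, interpolation gives $F_R(i\le j)=F_R(j\le k)=0$, while the inequality is automatic when $F_R(i\le k)=1$. The assignments $F\mapsto R_F$ and $R\mapsto F_R$ are visibly mutually inverse, using that for $i<j$ only one of $i\le j$ or $j\le i$ holds in $[n]$ and so $(i,j)\in R_F$ reduces to $F(i\le j)=0$. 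The final cardinality statement $|\mathbb{C}_n|=C_n$ is then immediate from $|\mathbb{K}_n|=C_n$ recorded in Stanley's list.

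The only place that requires care is the direction of the laxity $2$-cells: one must be careful to note that a \emph{lax} (not oplax) functor into $\Sigma\mathbf 2$ gives the inequality $F(i\le j)\vee F(j\le k)\le F(i\le k)$ rather than its reverse, since otherwise the correspondence with interpolation breaks. Beyond that, the proof is essentially a direct translation.
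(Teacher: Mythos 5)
Your proof is correct and follows essentially the same route as the paper's: both unpack a normal lax functor into $\Sigma\mathbf 2$ as a $\{0,1\}$-labelling of the order relation of $[n]$ with $F(i\le i)=0$ and the laxity inequality $F(j\le k)\vee F(i\le j)\le F(i\le k)$, derive the interpolation property from that inequality, and observe that $F$ is completely determined by which maps $i\le j$ go to $0$. You merely make the inverse assignment and its verification more explicit than the paper does.
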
  
\begin{proof} 
It is easy to see that the relation above is symmetric and reflexive. Suppose that $i\leq j \leq k$ and $(i,k)$ is in the set above. Then we find that $F(j\le k).F(i\le j) \Rightarrow F(i\le k)$ has codomain 0 and so must be the identity on 0. Then the domain is 0 and thus $F(j\le k) = F(i\le j) = 0$.
Now notice that since $\Sigma \mathbf 2$ has a single object and is locally posetal, normal lax functors $[n] \to \Sigma \mathbf 2$ are completely determined by their action on 1-cells. 
The assignment is a bijection because each $F$ is completely determined by which maps $i\le j$ go to 0.
$\square$
\end{proof}

\begin{corollary} The number of non-degenerate simplices of each dimension in 
$\mathbb C$ is given by the Motzkin sequence 
$$1, 1, 2, 4, 9, 21, 51, 127, 323, 835, 2188, 5798, 15511, 41835, \dots \ .$$
\end{corollary}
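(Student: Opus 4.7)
The plan is to combine Proposition~\ref{catalanbyrelations} with two standard ingredients: the Eilenberg--Zilber lemma for simplicial sets, and the classical identity presenting the Catalan numbers as the binomial transform of the Motzkin numbers.

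The Eilenberg--Zilber lemma asserts that in any simplicial set $X$, each $n$-simplex decomposes uniquely as $X(\sigma)(y)$, where $\sigma\colon[n]\twoheadrightarrow[k]$ is an order-preserving surjection in $\Delta$ and $y \in X_k$ is non-degenerate. The order-preserving surjections $[n]\twoheadrightarrow[k]$ are in bijection with the $\binom{n}{k}$ splittings of $\{0,\dots,n\}$ into $k+1$ non-empty consecutive blocks, so writing $N_k$ for the number of non-degenerate $k$-simplices of $\mathbb{C}$ one obtains
\begin{equation*}
|\mathbb{C}_n| \;=\; \sum_{k=0}^{n}\binom{n}{k}\,N_k.
\end{equation*}

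Combined with $|\mathbb{C}_n| = C_n$ from Proposition~\ref{catalanbyrelations}, this exhibits the Catalan sequence as the binomial transform of $(N_k)$. The conclusion then follows by invoking the classical identity $C_n = \sum_{k=0}^{n}\binom{n}{k}M_k$---in other words, that the Catalan numbers are themselves the binomial transform of the Motzkin numbers---which one may verify by a short generating-function computation showing $\mathcal{C}(x) = (1-x)^{-1}\mathcal{M}\bigl(x/(1-x)\bigr)$, where $\mathcal{C}$ and $\mathcal{M}$ are the ordinary generating functions of the two sequences. By the invertibility of binomial transformation, $N_k = M_k$ for all $k$, as claimed.

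The only substantive ingredient beyond the previous proposition is the Catalan--Motzkin binomial transform identity, which I take as classical. Were one to insist on a more self-contained proof, one could instead construct an explicit bijection between non-degenerate $n$-simplices---equivalently, via Proposition~\ref{catalanbyrelations}, those relations $R\in\mathbb{K}_n$ at which no index $i$ simultaneously satisfies $(i,i{+}1) \in R$ and $(a,i)\in R \Leftrightarrow (a,i{+}1)\in R$ for every $a\notin\{i,i{+}1\}$---and Motzkin paths of length $n$; but this seems substantially more laborious than simply citing the identity.
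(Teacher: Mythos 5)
Your argument is essentially the paper's own: the proof given there consists precisely of the Eilenberg--Zilber decomposition $\mathbb C_n \cong \sum_{m=0}^{n}\binom{n}{m}\times \mathrm{nd}_m\mathbb C$ together with the (cited) fact that the Catalan numbers are the binomial transform of the Motzkin numbers, and you have simply spelled out the two ingredients the paper leaves implicit, namely the count of surjections $[n]\twoheadrightarrow[k]$ and the invertibility of the binomial transform. The only caution is indexical: with the standard convention $C_n=\frac{1}{n+1}\binom{2n}{n}$ the identity reads $C_{n+1}=\sum_{k}\binom{n}{k}M_k$, so your $C_n$ must be understood as the paper's $\lvert\mathbb K_n\rvert = \lvert\mathbb C_n\rvert$ (i.e.\ $1,2,5,14,\dots$), which is how Proposition~\ref{catalanbyrelations} uses the symbol.
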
 
\begin{proof}
The Catalan numbers can be obtained from the Motzkin numbers~\cite{oeis} by taking
cardinalities in 
$$\mathbb C_n \cong \displaystyle\sum\limits_{m=0}^{n} \binom{n}{m} \times \mathrm{nd}_m\mathbb C$$
where $\mathrm{nd}_m\mathbb C$ is the set of non-degenerate m-simplices in $\mathbb C$.
$\square$
\end{proof}

The Catalan simplicial set can also be described using ideals.
The category of ordered sets and order-preserving functions is denoted by $\mathrm{Ord}$.
For ordered sets $M$ and $N$, an {\em ideal} $A : M\to N$ is a subset
$A\subseteq N\times M$ (that is, a relation from $M$ to $N$) such that 
$$q\le j, (j,i)\in A, i\le p \text{\ implies\ } (q,p)\in A \ .$$
Composition of ideals is composition of relations.
We have a 2-category $\mathrm{Idl}$ of ordered sets, ideals and inclusions (for example, see~\cite{CS}).
The identity ideal $1_M$ of $M$ is $\{(j,i):j\le i\}$.
Each order-preserving function $\xi : M \to N$ gives rise to ideals
$\xi_* : M \to N$ and $\xi^* : N \to M$ defined by
$$\xi_* = \{(j,i): j \le \xi(i)\} \quad \text{and} \quad \xi^* = \{(i,j): \xi(i)\le j\} \ .$$
Then $1_M\le \xi^* \xi_*$ and $\xi_* \xi^* \le 1_N$. 
This means $\xi_* \dashv \xi^*$ in $\mathrm{Idl}$. 
This defines functors $(-)_* : \mathrm{Ord} \to \mathrm{Idl}$ and
$(-)^* : \mathrm{Ord}^{\mathrm{op}} \to \mathrm{Idl}$, both the identity
on objects. This is all familiar $\CV$-category theory with $\CV = \mathbf 2$.

Let us put
$$\mathbb{S}_{n} = \{B\in \mathrm{Idl}([n],[n]) : 1_{[n]} \le B \} \ .$$
Then each $\xi : [m] \to [n]$ gives a function $\mathbb{S}_{\xi} : \mathbb{S}_{n} \to \mathbb{S}_{m}$
defined by 
$$\mathbb{S}_{\xi}(B) = {\xi}^* B {\xi}_* = \xi^{-1}(B) = \{(p,q) : (\xi(p),\xi(q)) \in B \} \ .$$  
Notice that $1_{[n]} \le B$ implies $1_{[n]} \le {\xi}^* {\xi}_* \le {\xi}^* B {\xi}_* $.  
Thus we have defined a simplicial set 
$\mathbb{S} : \Delta^{\mathrm{op}} \to \mathrm{Set}$,
indeed, a simplicial ordered set $\mathbb{S} : \Delta^{\mathrm{op}} \to \mathrm{Ord}$.  

There is an isomorphism $\mathbb S \cong \mathbb C$. These ideals can
be enumerated using Young diagrams (of which there are a Catalan
number); this provides an alternative proof that there are a Catalan
number of simplices at each dimension of $\mathbb C$.

\begin{remark}
Each Tamari lattice~\cite{TamariBk} has a Catalan number of elements. It is not too hard to define the Tamari order on each $\mathbb C_n$. It is natural to ask whether the simplicial structure on $\mathbb C$ preserves the Tamari order. This is not the case: we have $\rho \le s_1(\eye)$ in $\mathbb C_3$ but $d_1(\rho) = s_1(c) \nleq \eye = d_1(s_1(\eye))$.
\end{remark}

\section{The Catalan simplicial set classifies \\
skew monoidales}\label{sec:CatalanClassifies}

Once we have enumerated the simplices of $\mathbb C$ in low
dimensions, it will become clear that the image of every map
$F\colon\mathbb C \to N\CB$ picks out essentially the data and axioms
for a skew monoidale in $\CB$. More precisely, such maps are in
bijection with skew monoidales in $\CB'$ (defined below). First, it is
important to recognise that, since $N\CB$ is 4-coskeletal, every
simplicial map $F\colon\mathbb C \to N\CB$ is completely determined by
its image on the 4-truncation of its domain. In fact, if two such
simplicial maps are equal on their 3-truncation, then they are equal.

We now investigate the non-degenerate n-simplices in $\mathbb C$ for
$n\leq4$. It is convenient to note that all simplices above dimension
2 are uniquely determined by their faces. As such, every $n$-simplex
$a$ can be identified with the $(n+1)$-tuple $(d_0(a),d_2(a), \dots,
d_{n}(a))$ for $n \geq 2$.  \renewcommand{\aa}{a}
\renewcommand{\ll}{\ell} \newcommand{\rr}{r} \newcommand{\kk}{k}

\begin{itemize}
\item There is a single 0-simplex, call it $\star$.
\item There is a single non-degenerate 1-simplex $c$ whose two faces are necessarily $\star$.
\item There are two non-degenerate 2-simplices:
\begin{align*}
\tee &= (c,c,c) \\
\eye &= (s_0(\star),c, s_0(\star))
\end{align*}
\item There are four non-degenerate 3-simplices:
\begin{align*}
\aa &= (\tee,\tee,\tee,\tee) \\
\ll &= (\eye,s_1(c),\tee,s_1(c)) \\
\rr &= (s_0(c),\tee,s_0(c),\eye) \\
\kk &= (\eye, s_1(c), s_0(c), \eye) 
\end{align*}
\item There are nine non-degenerate 4-simplices:
\begin{equation*}
\begin{tabular}{ l l l }
$A1 = (\aa,\aa,\aa,\aa,\aa)$ \hspace{0.7cm} &
$A6 = (s_0(\eye),\rr, \kk, \ll, s_2(\eye))$ \\
$A2 = (\rr,s_1(\tee),\aa,s_1(\tee),\ll)$ &
$A7 = (\kk,\rr, s_0s_1(c), \ll, \kk)$  \\
$A3 = (\rr,\rr,s_2(\tee),\aa,s_2(\tee))$ &
$A8 = (\ll, s_1(\tee), s_0(\tee), \ll, \kk)$ \\
$A4 = (s_0(\tee),\aa,s_0(\tee),\ll,\ll)$ \hspace{0.7cm} &
$A9 = (\kk, \rr, s_2(\tee), s_1(\tee), \rr)$ \\
$A5 = (s_1(\eye),s_2(\eye), \kk, s_0(\eye), s_1(\eye))$ &
\end{tabular}
\end{equation*}
\end{itemize}
The image of $F$ on the 4-truncation of $\mathbb C$ consists of the following.
\begin{itemize}[nolistsep]
\item A single object $F(c) = A$.
\item Two 1-cells
\begin{equation*}
\cd[]{A\otimes A \ar[r]^-{F(\tee)} & A}\quad\text{and}\quad\cd[]{I\otimes I \ar[r]^-{F(\eye)} & A}\ .
\end{equation*}
\item Four 2-cells
\begin{equation*} 
 \cd{
    (A \otimes A) \otimes A \rtwocell{drr}{F(\aa)} \ar[rr]^{\mathfrak a} \ar[d]_{F(\tee) \otimes A} & &
    A \otimes (A \otimes A) \ar[d]^{A \otimes F(\tee)} \\
    A \otimes A \ar[r]_{F(\tee)} & A & A
    \otimes A \ar[l]^{F(\tee)}
  }
\end{equation*}
\begin{equation*} 
 \cd{
    (A \otimes I) \otimes I \rtwocell{drr}{F(\rr)} \ar[rr]^{\mathfrak a} \ar[d]_{\mathfrak r^\centerdot \otimes 1} & &
    A \otimes (I \otimes I) \ar[d]^{A \otimes F(\eye)} \\
    A \otimes I \ar[r]_{\mathfrak r^\centerdot} & A & A
    \otimes A \ar[l]^{F(\tee)}
  }
\end{equation*}
\begin{equation*} 
 \cd{
    (I \otimes I) \otimes A \rtwocell{drr}{F(\ll)} \ar[rr]^{\mathfrak a} \ar[d]_{F(\eye) \otimes 1} & &
    I \otimes (I \otimes A) \ar[d]^{1 \otimes \mathfrak l} \\
    A \otimes A \ar[r]_{F(\tee)} & A & I
    \otimes A \ar[l]^{\mathfrak l}
  }
\end{equation*}
\begin{equation*} 
 \cd{
    (I \otimes I) \otimes I \rtwocell{drr}{F(\kk)} \ar[rr]^{\mathfrak a} \ar[d]_{F(\eye) \otimes 1} & &
    I \otimes (I \otimes I) \ar[d]^{1 \otimes F(\eye)} \\
    A \otimes I \ar[r]_{\mathfrak r^\centerdot} & A & I
    \otimes A \ar[l]^{\mathfrak l}
  }
\end{equation*}
\item And nine equalities:
\end{itemize}
\begin{equation}\label{stringaxiom1}
        \def\abc{\tee}\def\abd{\tee}\def\abe{\tee}
        \def\acd{\tee}\def\ace{\tee}\def\ade{\tee}
        \def\bcd{\tee}\def\bce{\tee}
        \def\bde{\tee}
        \def\cde{\tee}
        \def\abde{F\aa}\def\acde{F\aa}
        \def\abcd{F\aa}\def\abce{F\aa}
        \def\bcde{F\aa}
        \stringpent
\end{equation}
\begin{equation}\label{stringaxiom2}
        \def\abc{\mathfrak l}\def\abd{\tee}\def\abe{\tee}
        \def\acd{\mathfrak l}\def\ace{\tee}\def\ade{\tee}
        \def\bcd{\eye}\def\bce{\mathfrak r^\centerdot}
        \def\bde{\tee}
        \def\cde{\mathfrak r^\centerdot}
        \def\abde{F\aa}\def\acde{s_1(\tee)}
        \def\abcd{F\ll}\def\abce{s_1(\tee)}
        \def\bcde{F\rr}
        \stringpent
\end{equation}
\begin{equation}\label{stringaxiom3}
        \def\cde{\tee}\def\bde{\mathfrak r^\centerdot}\def\ade{\tee}
        \def\bce{\mathfrak r^\centerdot}\def\ace{\tee}\def\abe{\mathfrak r^\centerdot}
        \def\bcd{\mathfrak r^\centerdot}\def\acd{\tee}
        \def\abd{\mathfrak r^\centerdot}
        \def\abc{\eye}
        \def\abde{s_2\tee}\def\abce{F\rr}
        \def\bcde{s_2\tee}\def\acde{F\aa}
        \def\abcd{F\rr}
        \stringpent
\end{equation}
\begin{equation}\label{stringaxiom4}
        \def\cde{\eye}\def\bde{\mathfrak l}\def\ade{\mathfrak l}
        \def\bce{\tee}\def\ace{\tee}\def\abe{\tee}
        \def\bcd{\mathfrak l}\def\acd{\mathfrak l}
        \def\abd{\tee}
        \def\abc{\tee}
        \def\abde{s_0\tee}\def\abce{F\aa}
        \def\bcde{F\ll}\def\acde{F\ll}
        \def\abcd{s_0\tee}
        \stringpent
\end{equation}
\begin{equation}\label{stringaxiom5}
        \def\cde{\mathfrak l}\def\bde{\eye}\def\ade{\mathfrak l}
        \def\bce{\eye}\def\ace{\eye}\def\abe{\mathfrak l}
        \def\bcd{\mathfrak l}\def\acd{\eye}
        \def\abd{\eye}
        \def\abc{\mathfrak l}
        \def\abde{F\kk}\def\abce{s_2(\eye)}
        \def\bcde{s_1\eye}\def\acde{s_0(\eye)}
        \def\abcd{s_1\eye}
        \stringpent
\end{equation}
\begin{equation}\label{stringaxiom6}
        \def\cde{\eye}\def\bde{\tee}\def\ade{\mathfrak l}
        \def\bce{\mathfrak r^\centerdot}\def\ace{\tee}\def\abe{\mathfrak r^\centerdot}
        \def\bcd{\mathfrak r^\centerdot}\def\acd{\mathfrak l}
        \def\abd{\eye}
        \def\abc{\eye}
        \def\abde{F\kk}\def\abce{F\rr}
        \def\bcde{s_2\eye}\def\acde{F\ll}
        \def\abcd{s_0\eye}
        \stringpent
\end{equation}
\begin{equation}\label{stringaxiom7}
        \def\cde{\eye}\def\bde{\mathfrak l}\def\ade{\mathfrak l}
        \def\bce{\mathfrak r^\centerdot}\def\ace{\tee}\def\abe{\mathfrak r^\centerdot}
        \def\bcd{\eye}\def\acd{\mathfrak l}
        \def\abd{\mathfrak r^\centerdot}
        \def\abc{\eye}
        \def\abde{s_0s_1c}\def\abce{F\rr}
        \def\bcde{F\kk}\def\acde{F\ll}
        \def\abcd{F\kk}
        \stringpent
\end{equation}
\begin{equation}\label{stringaxiom8}
        \def\cde{\eye}\def\bde{\mathfrak l}\def\ade{\mathfrak l}
        \def\bce{\mathfrak r^\centerdot}\def\ace{\tee}\def\abe{\tee}
        \def\bcd{\mathfrak l}\def\acd{\mathfrak l}
        \def\abd{\tee}
        \def\abc{\mathfrak l}
        \def\abde{s_0\tee}\def\abce{s_1\tee}
        \def\bcde{F\kk}\def\acde{F\ll}
        \def\abcd{F\ll}
        \stringpent
\end{equation}
\begin{equation}\label{stringaxiom9}
        \def\cde{\mathfrak r^\centerdot}\def\bde{\tee}\def\ade{t}
        \def\bce{\mathfrak r^\centerdot}\def\ace{\tee}\def\abe{\mathfrak r^\centerdot}
        \def\bcd{\eye}\def\acd{\mathfrak l}
        \def\abd{\mathfrak r^\centerdot}
        \def\abc{\eye}
        \def\abde{s_2\tee}\def\abce{F\rr}
        \def\bcde{F\rr}\def\acde{s_1\tee}
        \def\abcd{F\kk}
        \stringpent
\end{equation}

The similarity with skew monoidales in $\CB$ is immediately clear. 
There is however one problem: the unit map for a skew monoidale is of the form $I \to A$ but $F(\eye)$ is a map $I\otimes I \to F(c)$. 
Similarly, the left and right unit constraints for a skew monoidale have different domains and codomains than $F(\rr)$ and $F(\ll)$. 
This disparity means that the structure given by $F$ is not strictly that of a skew monoidale.
However, the difference amounts to the fact that $I\otimes I$ does not equal $I$.

We address this problem by considering the monoidal bicategory $\CB'$ with the same tensor as $\CB$ but whose unit object is $I\otimes I$. The unit maps for this monoidal structure are
\begin{equation*}
\cd[]
{
 (I\otimes I)\otimes A \ar[r]^-{\mathfrak l_I \otimes A} & I\otimes A \ar[r]^-{\mathfrak l_A} & A
}
\quad\text{and}\quad
\cd[]
{
 A \otimes (I\otimes I) \ar[r]^-{A\otimes \mathfrak r^\centerdot_I} & A\otimes I \ar[r]^-{\mathfrak r_A} & A
}\ .
\end{equation*}
The invertible modifications $\pi, \mu, \sigma$ and $\tau$ are also altered accordingly.
In this case the identity functor on $\CB$ becomes a strong monoidal functor $\CB \to \CB'$.
Having modified the unit object in $\CB$ we can construct a bijection between simplicial maps $F\colon \mathbb C \to N\CB$ and skew monoidales in $\CB'$.

\begin{remark}
In the fundamental example $\CB = \mathrm{Cat}$, there is a further bijection between skew monoidales in $\mathrm{Cat}'$ and skew monoidales in $\mathrm{Cat}$; that is, skew-monoidal categories.
\end{remark}

The first thing to notice is that the equality in \eqref{stringaxiom5}
together with the monoidal bicategory axioms force $F(\kk)$ to be
equal to the 2-cell
\begin{equation} \label{kappastring}
  \vcenter{\hbox{\begin{tikzpicture}[y=0.8pt, x=0.9pt,yscale=-1, inner
        sep=0pt, outer sep=0pt, every text node
        part/.style={font=\scriptsize} ]
  \path[draw=black,line join=miter,line cap=butt,line width=0.650pt] 
  (60.5482,860.1807) .. controls (97.7329,834.6731) and (20.2790,833.4806) .. 
  node[above=0.07cm,pos=0.5] {$\mathfrak r^\centerdot 1$}(21.6649,878.0631) .. controls (22.2692,897.5009) and (50.8157,910.5973) .. (77.9793,909.6016)
  (83.8468,909.1607) .. controls (91.1546,908.3220) and (98.2018,906.3945) .. 
  node[above left=0.055cm,pos=0.55] {$\mathfrak r$}(104.3947,903.2213)
  (10.0000,912.3622) .. controls (28.6404,912.3622) and (47.9703,914.2230) .. 
  node[above right=0.12cm,at start] {\!\!$Fi.1$}(68.7323,916.3393)
  (77.0089,917.1848) .. controls (95.0409,919.0225) and (114.1847,920.8922) .. 
  node[below=0.074cm,pos=0.5] {$Fi$}(134.9003,921.7992)
  (140.5333,922.0201) .. controls (146.8711,922.2390) and (153.3558,922.3622) .. 
  node[above left=0.12cm,at end] {$1.Fi$\!\!}(160.0000,922.3622)
  (10.0000,932.3622) .. controls (46.3655,932.3622) and (117.0711,908.0690) .. 
  node[above right=0.12cm,at start] {\!$\mathfrak r^\centerdot$}(60.5546,892.8673)
  (61.6193,889.4751) .. controls (137.9573,830.7177) and (122.5843,941.9527) .. 
  node[above left=0.12cm,at end] {$\mathfrak l$\!}(160.0000,942.3622)
  (59.5990,867.2517) .. controls (61.2888,870.8982) and (66.6120,874.7013) .. 
  node[above right=0.05cm,pos=0.5,rotate=-35]{$1 \mathfrak l$}(73.0425,878.5617)
  (78.0172,881.4569) .. controls (88.8715,887.6317) and (100.9341,893.8888) .. 
  node[above right=0.035cm,pos=0.4]{$\mathfrak l$}(104.0457,899.8292);
  \path[draw=black,line join=miter,line cap=butt,line width=0.650pt] 
  (62.3769,864.7264) .. controls (128.2265,830.2570) and (130.2744,900.0073) .. 
  node[above left=0.12cm,at end] {$\mathfrak a$\!}(160.0000,902.3622);
  \path[fill=black] (52.527935,864.72632) node[circle, draw, line width=0.65pt, minimum width=5mm, fill=white, inner sep=0.25mm] (text3790) {$\mu^{\scriptscriptstyle -1}$  };
  \path[fill=black] (55.101635,890.83112) node[circle, draw, line width=0.65pt, minimum width=5mm, fill=white, inner sep=0.25mm] (text3790-1) {$\theta^{\scriptscriptstyle -1}$   };
  \path[fill=black] (104.09403,901.18524) node[circle, draw, line width=0.65pt, minimum width=5mm, fill=white, inner sep=0.25mm] (text3790-6) {$\theta$};
      \end{tikzpicture}}}
\end{equation}
and thus completely specified by the coherence data of $\CB$.

We now compare the data comprising the image of $F$ with the data
for a skew monoidale in $\CB'$. At dimensions 0 and 1 these data are
exactly equal: a single object $F(c) = A$ together with two 1-cells
$F(\tee) \colon A \otimes A \to A$ and $F(\eye) \colon I \otimes I \to
A$. At dimension two, the $2$-cell $F(\aa)$ has the same form as the
associativity constraint $\alpha$ for a skew monoidale; whilst, as
observed above, $F(\kk)$ is necessarily of the form~\eqref{kappastring}.  On the other
hand, the data $F(\ll)$ and $F(\rr)$ give rise to left and right unit
constraints $\lambda$ and $\rho$ for a skew monoidale in $\CB'$ upon forming the
composites
\begin{equation*}
  \vcenter{\hbox{\begin{tikzpicture}[y=0.8pt, x=0.9pt,yscale=-1, inner
        sep=0pt, outer sep=0pt, every text node
        part/.style={font=\scriptsize} ]
  \path[draw=black,line join=miter,line cap=butt,line width=0.650pt] 
  (77.4746,873.6249) .. controls (65.7563,893.8888) and (37.6777,911.3520) .. 
  node[below right=0.06cm,pos=0.6] {$\mathfrak l$}(22.5254,894.9370)
  (119.7970,902.3622) .. controls (89.8639,902.6202) and (75.3129,897.3809) .. 
  node[above left=0.12cm,at start] {$\mathfrak l$\!}(63.5684,893.3012)
  (57.8098,891.3693) .. controls (48.0652,888.3122) and (38.8450,886.9959) .. 
  node[above right=0.06cm,pos=0.4] {$1\mathfrak l$}(21.7678,891.8571);
  \path[draw=black,line join=miter,line cap=butt,line width=0.650pt]
  (77.5761,869.1198) .. controls (65.5753,863.5577) and (30.6214,880.5944) .. 
  node[above left=0.06cm,pos=0.5] {$\mathfrak a$}(22.2728,888.9282);
  \path[draw=black,line join=miter,line cap=butt,line width=0.650pt]
  (120.0495,882.3622) .. controls (94.0270,882.3622) and (96.0094,871.2505) .. 
  node[above left=0.12cm,at start] {$\mathfrak l 1$\!}(80.0495,871.2505);
  \path[draw=black,line join=miter,line cap=butt,line width=0.650pt]
  (-10.0000,882.3622) .. controls (-0.5398,882.3622) and (13.5355,882.8673) .. 
  node[above right=0.12cm,at start] {\!\!$Fi. 1$}(19.0914,888.9282);
  \path[draw=black,line join=miter,line cap=butt,line width=0.650pt]
  (-10.0000,902.3622) .. controls (-0.8054,902.3622) and (14.5457,900.5944) .. 
  node[above right=0.12cm,at start] {\!$Ft$}(18.8388,894.0284);
  \path[fill=black] (78.568542,870.78729) node[circle, draw, line width=0.65pt, minimum width=5mm, fill=white, inner sep=0.25mm] (text3387) {$\sigma^{\scriptscriptstyle -1}$     };
  \path[fill=black] (19.697973,892.25299) node[circle, draw, line width=0.65pt, minimum width=5mm, fill=white, inner sep=0.25mm] (text3391) {$F\ll$   };
      \end{tikzpicture}}} \qquad
\text{and} \qquad
  \vcenter{\hbox{\begin{tikzpicture}[y=0.8pt, x=0.9pt,yscale=-1, inner
        sep=0pt, outer sep=0pt, every text node
        part/.style={font=\scriptsize} ]
  \path[draw=black,line join=miter,line cap=butt,line width=0.650pt] 
  (106.0000,862.3622) -- node[above left=0.12cm,at end] {$1 \mathfrak r$\!}(130.0000,862.3622);
  \path[draw=black,line join=miter,line cap=butt,line width=0.650pt] 
  (77.5241,882.8673) -- node[above left=0.12cm,at end] {$1.Fi$\!\!} (130.0000,882.3622);
  \path[draw=black,line join=miter,line cap=butt,line width=0.650pt] 
  (77.5241,886.7048) .. controls (85.1003,895.0386) and (105.7563,902.3622) .. 
  node[above left=0.12cm,at end] {$Ft$\!}(130.0000,902.3622);
  \path[draw=black,line join=miter,line cap=butt,line width=0.650pt] 
  (103.9848,865.4840) .. controls (99.6916,869.7771) and (81.5698,871.5449) .. 
  node[above left=0.05cm,pos=0.5] {$\mathfrak a$}(77.7817,878.8685);
  \path[draw=black,line join=miter,line cap=butt,line width=0.650pt] 
  (103.7323,860.6857) .. controls (93.8833,844.1683) and (32.0387,832.8929) .. 
  node[above=0.05cm,pos=0.66] {$\mathfrak r$}(33.0291,868.4666) .. controls (33.5742,888.0447) and (62.3769,897.3038) .. 
  node[below=0.05cm,pos=0.8] {$\mathfrak r^\centerdot$}(73.2361,886.1921)
  (72.7259,882.0107) .. controls (66.4734,880.4919) and (47.8665,880.0812) .. 
  node[above right=0.05cm,pos=0.66,rotate=-10] {$\mathfrak r^\centerdot 1$}(47.6192,867.2444) .. controls (47.4461,858.2562) and (59.4891,852.3199) .. (74.8220,848.5084)
  (85.9387,846.1826) .. controls (102.4316,843.2893) and (120.1558,842.3313) .. 
  node[above left=0.12cm,at end] {$\mathfrak r$\!}(130.0000,842.3622);
  \path[fill=black] (75.508904,883.41418) node[circle, draw, line width=0.65pt, minimum width=5mm, fill=white, inner sep=0.25mm] (text3622) {$F\rr$    };
  \path[fill=black] (104.80333,862.45349) node[circle, draw, line width=0.65pt, minimum width=5mm, fill=white, inner sep=0.25mm] (text3626) {$\tau^{\scriptscriptstyle -1}$   };
      \end{tikzpicture}}}\ .
\end{equation*}
The assignments $F(\ll) \mapsto \lambda$ and $F(\rr) \mapsto \rho$ are
in fact bijective, the former since it is given by composing with an
invertible $2$-cell, and the latter since it is given by composition
with an invertible $2$-cell followed by transposition under
adjunction. Thus the two-dimensional data of $F$ and of a skew
monoidale in $\CB'$ are in bijective correspondence.

Finally, we find after some calculation that, with respect to the
$\alpha$, $\lambda$ and $\rho$ defined above, equations
\eqref{stringaxiom1}, \eqref{stringaxiom2}, \eqref{stringaxiom3},
\eqref{stringaxiom4}, \eqref{stringaxiom6} and \eqref{stringaxiom7}
express precisely the five axioms for a skew monoidale in $\CB'$;
equation \eqref{stringaxiom5} specifies $F(\kk)$ and nothing more;
whilst equations \eqref{stringaxiom8} and \eqref{stringaxiom9} are
both equalities which follow using only the axioms for a monoidal
bicategory. We have thus shown:

\begin{theorem}
There is a bijection between simplicial maps $\mathbb C \to N\CB$ and skew monoidales in $\CB'$.
\end{theorem}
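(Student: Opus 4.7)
The plan is to organize and verify the correspondence already outlined in the excerpt. By $4$-coskeletality of $N\CB$, any simplicial map $F\colon\mathbb C\to N\CB$ is determined by its action on simplices of dimension $\leq 4$; moreover, since all $3$- and $4$-simplices of $\mathbb C$ have been enumerated above, to specify $F$ it suffices to specify its values on the $0$-, $1$-, and $2$-simplices of $\mathbb C$ (subject to face and degeneracy compatibility), together with the $2$-cells $F(\aa), F(\ll), F(\rr), F(\kk)$, such that the nine equalities \eqref{stringaxiom1}--\eqref{stringaxiom9} hold. I would first verify this reduction carefully, noting that degenerate simplices are forced by the rules given in Section~\ref{sec:nerves}.

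Next I would set up the translation to skew monoidales in $\CB'$. Given $F$, define $A := F(c)$, $t := F(\tee)\colon A\otimes A\to A$, and $i := F(\eye)\colon I\otimes I \to A$, noting that $I\otimes I$ is precisely the unit of $\CB'$. Define $\alpha := F(\aa)$, and define $\lambda$, $\rho$ to be the composites displayed at the end of the excerpt, which exhibit the needed boundary for the unit constraints in $\CB'$ (built from the pseudonaturality cells $\sigma$, $\tau$ of the original monoidal bicategory structure). As the authors observe, $F(\ll)\mapsto\lambda$ is bijective because it is postcomposition with an invertible $2$-cell, and $F(\rr)\mapsto\rho$ is bijective because it is postcomposition with an invertible $2$-cell followed by a transposition across the adjoint equivalence $\mathfrak r\dashv\mathfrak r^\centerdot$. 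Conversely, given a skew monoidale $(A,i,t,\alpha,\lambda,\rho)$ in $\CB'$, invert these assignments to recover $F(\aa)$, $F(\ll)$, $F(\rr)$, and let $F(\kk)$ be defined by the forced formula~\eqref{kappastring}.

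The remaining and central task is the axiom matching, which I expect to be the main obstacle: one must show, by direct string-diagrammatic computation, that under the translation above, equations \eqref{stringaxiom1}, \eqref{stringaxiom2}, \eqref{stringaxiom3}, \eqref{stringaxiom4}, \eqref{stringaxiom6}, \eqref{stringaxiom7} become precisely the five skew-monoidale axioms~\eqref{axiom1}--\eqref{axiom5} written in the monoidal bicategory $\CB'$. This involves substituting the composite formulas for $\lambda$ and $\rho$ and for the unit constraints of $\CB'$ (which themselves are built from $\mathfrak l_I$, $\mathfrak r^\centerdot_I$ and the original $\mathfrak l$, $\mathfrak r$), and then repeatedly using $\pi$, $\mu$, $\sigma$, $\tau$, $\theta$ along with the triangle identities for $\mathfrak l\dashv\mathfrak l^\centerdot$ and $\mathfrak r\dashv\mathfrak r^\centerdot$. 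Simultaneously, one must verify that \eqref{stringaxiom5} contributes no further constraint beyond pinning down $F(\kk)$ as~\eqref{kappastring}, and that \eqref{stringaxiom8} and \eqref{stringaxiom9} are automatic consequences of the coherence axioms (GPS) for a monoidal bicategory.

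Finally, I would assemble the two assignments --- $F\mapsto (A,i,t,\alpha,\lambda,\rho)$ and its inverse --- and check they are mutually inverse. Since the data correspondence is bijective by the observations above, and the axiom correspondence is an equivalence of equations, this yields the claimed bijection. The hard labour lies entirely in the string-diagram verifications of the preceding paragraph; the bookkeeping part (coskeletality reduction, degeneracies, face matching) is straightforward once the enumeration of $\mathbb C_{\leq 4}$ is in hand.
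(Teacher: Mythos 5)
Your proposal is correct and follows essentially the same route as the paper: reduction via $4$-coskeletality to the enumerated low-dimensional simplices, the passage to $\CB'$ to repair the $I\otimes I$ versus $I$ mismatch, the bijective reassignment of $F(\ll)$ and $F(\rr)$ to $\lambda$ and $\rho$ via whiskering with $\sigma^{-1}$, $\tau^{-1}$ and transposition, the observation that \eqref{stringaxiom5} merely forces $F(\kk)$ to be \eqref{kappastring}, and the matching of the remaining equations with the five skew-monoidale axioms, with \eqref{stringaxiom8} and \eqref{stringaxiom9} automatic. The paper likewise leaves the string-diagram verifications as ``some calculation,'' so your outline matches its level of detail as well as its structure.
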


\begin{remark}
If we consider a bicategory $\CK$ (not necessarily monoidal), and let $N\CK$ be its nerve, then simplicial maps $\mathbb C \to N\CK$ are monads in $\CK$.
\end{remark}

\begin{remark}
  The assignation $\CB \mapsto N(\CB)$ sending a monoidal bicategory
  to its nerve can be extended to a functor $N \colon
  \mathrm{MonBicat}_s \to \mathrm{SSet}$, where $\mathrm{MonBicat}_s$
  is the category of monoidal bicategories and morphisms which
  strictly preserve all the structure. When seen in this way, the
  nerve functor has a left adjoint $\Phi$; it follows that $\Phi(
  \mathbb{C})$ is the free monoidal bicategory containing a skew
  monoidale.
\end{remark}

\end{document}